\documentclass[12pt]{article}

\usepackage[margin=1.25in]{geometry} % Adjust to 1.5in for wider margins, or 1in for wider text area

\usepackage{amsmath}
\usepackage{amssymb}
\usepackage{amsthm}
\usepackage{amsfonts}
\usepackage{mathrsfs}
\usepackage{graphicx}
\usepackage{xcolor}
\usepackage[round]{natbib}

\usepackage{authblk}

\newtheorem{theorem}{Theorem}
\newtheorem{proposition}{Proposition}
\newtheorem{lemma}{Lemma}

\newtheorem{definition}{Definition}

\newcommand{\RR}{\mathbb{R}}
\newcommand{\NN}{\mathbb{N}}

\DeclareMathOperator{\diam}{diam}

\title{On the geometry of weak convergence without total variation convergence}

\author{Nicola Bariletto \hspace{2cm} Stephen G. Walker}

\affil{The University of Texas at Austin}

\date{\today}

\begin{document}

\maketitle

\begin{abstract}
We study some geometric consequences of the discrepancy between weak and total variation convergence of probability measures. We consider a sequence of probability measures on $\RR^d$, admitting densities with respect to the Lebesgue measure, that converge weakly to a limiting measure but stay bounded away from it in total variation distance. We show that the sets on which the sequence passes from below to above the limiting density must grow unboundedly in perimeter, as measured by the $(d-1)$-dimensional Hausdorff measure. Moreover, this growth persists within a fixed compact set, so that it must reflect an increase in the geometric complexity of these sets rather than only an unbounded expansion in ambient space. We further provide a sufficient condition under which the number of connected components of the sets diverges, recovering a behavior that is closely reminiscent of the one-dimensional case, in which the number of oscillations of the sequence of densities around the limit grows without bound. We also show that this condition cannot be dispensed with in general, by means of an explicit sequence of measures in the plane whose passing sets remain connected in a single component at every stage while growing in length and complexity. Another sequence, built from cosine oscillations, illustrates the complementary behavior, in which the number of components diverges.
\end{abstract}

\section{Introduction}\label{sec:introduction}

The convergence of  probability measures \citep{billingsley1999} is a classical topic in probability theory and lies at the core of many of its applications. In this article, we study some geometric aspects of the relationship between two standard modes of convergence of probability measures on $\mathbb R^d$, that is, total variation and weak convergence, in the case where the involved probabilities are absolutely continuous with respect to the Lebesgue measure. In particular, we prove that weak convergence paired with a lack of convergence in total variation distance implies a diverging size (in terms of $(d-1)$-dimensional Hausdorff measure) of the subset of $\mathbb R^d$ on which the non-converging sequence of densities $f_j$ passes from below to above the density $g$ of the limit measure. We further characterize various aspects of this phenomenon and illustrate it with concrete two-dimensional examples. In particular, our work reveals that the recently established results for $d=1$, in terms of a growing number of oscillations of $f_j$ around $g$ \citep{bariletto2025identifiability}, translate to higher dimensions, but only through a substantial upgrade in both technical machinery and underlying insight: the meaningful higher-dimensional analogue of the number of oscillations is not their count, but the $(d-1)$-dimensional Hausdorff measure of the set on which $f_j$ passes from below to above $g$, with the two notions coinciding in general only in the one-dimensional case.

It is a well-known fact that total variation convergence implies weak convergence, and our focus in this work is on the  consequences of the discrepancy between these two modes of convergence. Besides this being a topic encompassing fundamental concepts in probability theory, our interest is motivated by Bayesian asymptotic statistics, and especially by a celebrated result known as \emph{Schwartz's consistency theorem} \citep{schwartz1965}. Loosely speaking, the theorem says that, under a mild prior support condition, the posterior distribution arising from a dominated likelihood model concentrates, as the size of the sampled dataset increases to infinity, within any weak neighborhood of the data-generating distribution. Often, however, one is interested in ascertaining posterior concentration within open sets belonging to a stronger topology, such as that induced by the total variation distance, which effectively measures the discrepancy between the densities (Radon-Nikodym derivatives) associated to any two probability measures. In cases where the two topologies agree, total variation contraction of the posterior is immediately deduced from contraction in weak neighborhoods via Schwartz's theorem. However, this does not hold in general, in particular in the very common scenario where the dominating measure is the Lebesgue measure, which is adopted to model continuous Euclidean data.

To strengthen the posterior contraction result of Schwartz to total variation neighborhoods, a number of contributions have proposed sufficient conditions in the form of regularity assumptions for sets of densities (known as sieves) on which the prior puts most of its mass \citep{barron1999,ghosal1999,walker2004squarerootsum}. Recent work has instead tackled the problem by highlighting the pathological consequences of weak posterior contraction in the absence of total variation contraction \citep{walker2005data,bariletto2025identifiability,bariletto2025necessary}. In particular, \cite{bariletto2025identifiability}, focusing on families of densities with respect to the Lebesgue measure on the real line, showed that weak convergence accompanied by total variation non-convergence implies the existence of a sequence of densities $f_j$ that oscillate with arbitrarily high frequency around the density $g$ associated with the limiting probability measure $G$. A classic example is given by the sequence $f_j(x)=(1+\cos(2\pi jx))1_{[0,1]}(x)$, visualized in Figure~\ref{fig:cosine_1d}, whose associated probability measures converge weakly to the uniform measure $G$ on $[0,1]$ but remain bounded away from it in total variation, due to the increasingly oscillatory behavior of $f_j$ around the limiting density $g(x)=1_{[0,1]}(x)$.

\begin{figure}
    \centering
    \includegraphics[width=0.9\linewidth]{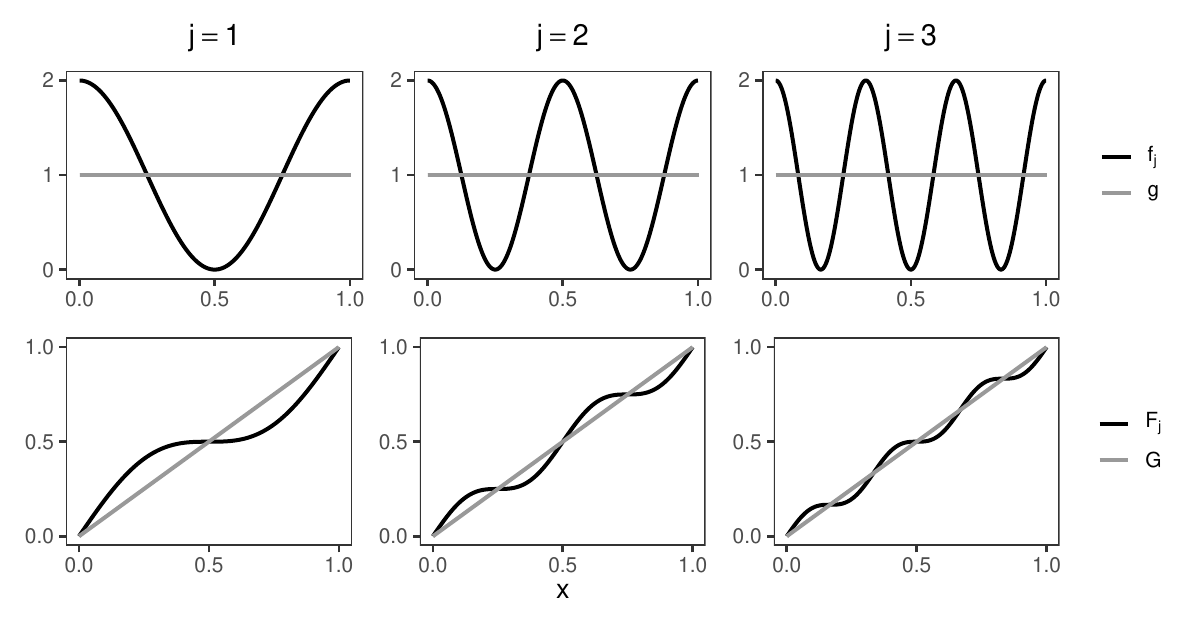}
    \caption{The one-dimensional sequence $f_j(x)=(1+\cos(2\pi jx))1_{[0,1]}(x)$ (black, top row) oscillates around $g(x)=1_{[0,1]}(x)$ (grey) with increasing frequency as $j$ grows. Although the densities do not converge, implying a lack of total variation convergence of the associated measures $F_j$, the latter converge weakly to $G$, as illustrated by the pointwise convergence of the cumulative distribution functions (bottom row) to that of $G$.}
    \label{fig:cosine_1d}
\end{figure}

In this work, our goal is to rigorously extend these insights to probability measures on $\mathbb R^d$, with $d\in\mathbb N$. This poses significant technical challenges, because while in $d=1$ it is possible to unambiguously describe the ``number of oscillations'' of $f_j$ around $g$, for instance as the number of connected components of the set $\{x\in\mathbb R : f_j(x)>g(x)\}$, and to show that its divergence to infinity satisfactorily describes the pathological consequences of weak without total variation convergence, in $d\geq 2$ these considerations no longer hold, and a conceptually more nuanced approach is required.

Our strategy for overcoming these difficulties is as follows. Consider again the $d=1$ cosine-based example of Figure~\ref{fig:cosine_1d}, where every density involved is continuous. In this case, the number of oscillations is meaningfully captured by the size, or cardinality, of the set $\{x\in[0,1]:f_j(x)=g(x)\}$, which equals $j+1$ for all $j\in\mathbb N$ in this particular instance. If instead each $f_j$ were a step function, the number of oscillations could be captured by the size of the boundary of the set $\{x\in\mathbb R : f_j(x)>g(x)\}$ (or, more formally, the boundary of its closure), where this boundary intuitively corresponds to the set of points $x$ at which $f_j$ ``jumps'' discontinuously from below to above $g$. In both cases, the key insight is that the number of oscillations of $f_j$ around $g$, whose growth properties we wish to study, is appropriately captured by the size, in the sense of the number of elements, of the set at which $f_j$ \emph{passes} from below to above $g$, whether this happens by $f_j$ intersecting $g$, by $f_j$ jumping discontinuously from below $g$ to above it, or by a combination of both. While a more formal definition of these sets will be given later, the intuition provided so far entitles us to refer to such sets generically as \emph{passing sets}.

This intuitive notion of passing set, where $f_j$ ``passes'' from below to above $g$, together with the idea of measuring its size as $j$ grows, constitutes the key conceptual step toward turning the $d=1$ result on increasing oscillations into a general statement in arbitrary dimensions. Once again to build visual intuition, consider $d=2$ and the densities plotted in Figure~\ref{fig:2d_oscillations}. Although not formally shown, the sequence of red densities is obtained as a mixture of Gaussian kernels that, as $j$ increases, weakly approximates the uniform measure $G$ on $[0,1]^2$; at the same time, due to a rapidly decreasing kernel variance as $j$ grows, the densities wiggle more and more markedly around the blue uniform density $g(x)=1_{[0,1]^2}$, thereby preventing total variation convergence. In this case too, since all densities are continuous, the increasingly oscillatory behavior of $f_j$ around $g$ may be intuitively captured by an increasing ``size'' of the passing set $\{x\in \mathbb R^2 : f_j(x)=g(x)\}$, which Figure~\ref{fig:2d_oscillations} visualizes by projecting it in orange onto a separate plane. Also in this case, more general notions of passing set can be considered for discontinuous or hybrid instances, without changing the substantive idea that such a set describes the collection of points at which $f_j$ passes from below to above $g$.

However, unlike in the $d=1$ case, it is not immediately clear which notion of ``size'' of the passing set one should use. In fact, the cardinality of the set itself becomes meaningless, as it is always infinite for these kinds of boundary surfaces. A related intuition, which is very close in spirit to the one-dimensional case, would be to count the number of connected components of the passing set (which, in Figure~\ref{fig:2d_oscillations}, grows as $1,2,5$ for $j=1,2,3$). Although well defined, this notion of size will be shown in our subsequent analysis to be inappropriate in general, as weak without total variation convergence may occur even in the presence of a passing set formed by a single connected component; we refer to Section~\ref{sec:example} for a concrete example of this phenomenon.

A visually valid measure of size in $\mathbb R^2$, and one that we will be able to show in general to diverge with $j$ as a consequence of weak without total variation convergence, is instead the \emph{arc length} of the passing set, which in Figure~\ref{fig:2d_oscillations} clearly increases with $j$. Of course, for $d>2$ the passing sets become higher-dimensional surfaces, so that arc length must be further generalized accordingly, and the \emph{$(d-1)$-dimensional Hausdorff measure} \citep{evans2015measure} will serve as a precise notion to that end.

\begin{figure}
    \centering
    \includegraphics[width=0.99\linewidth]{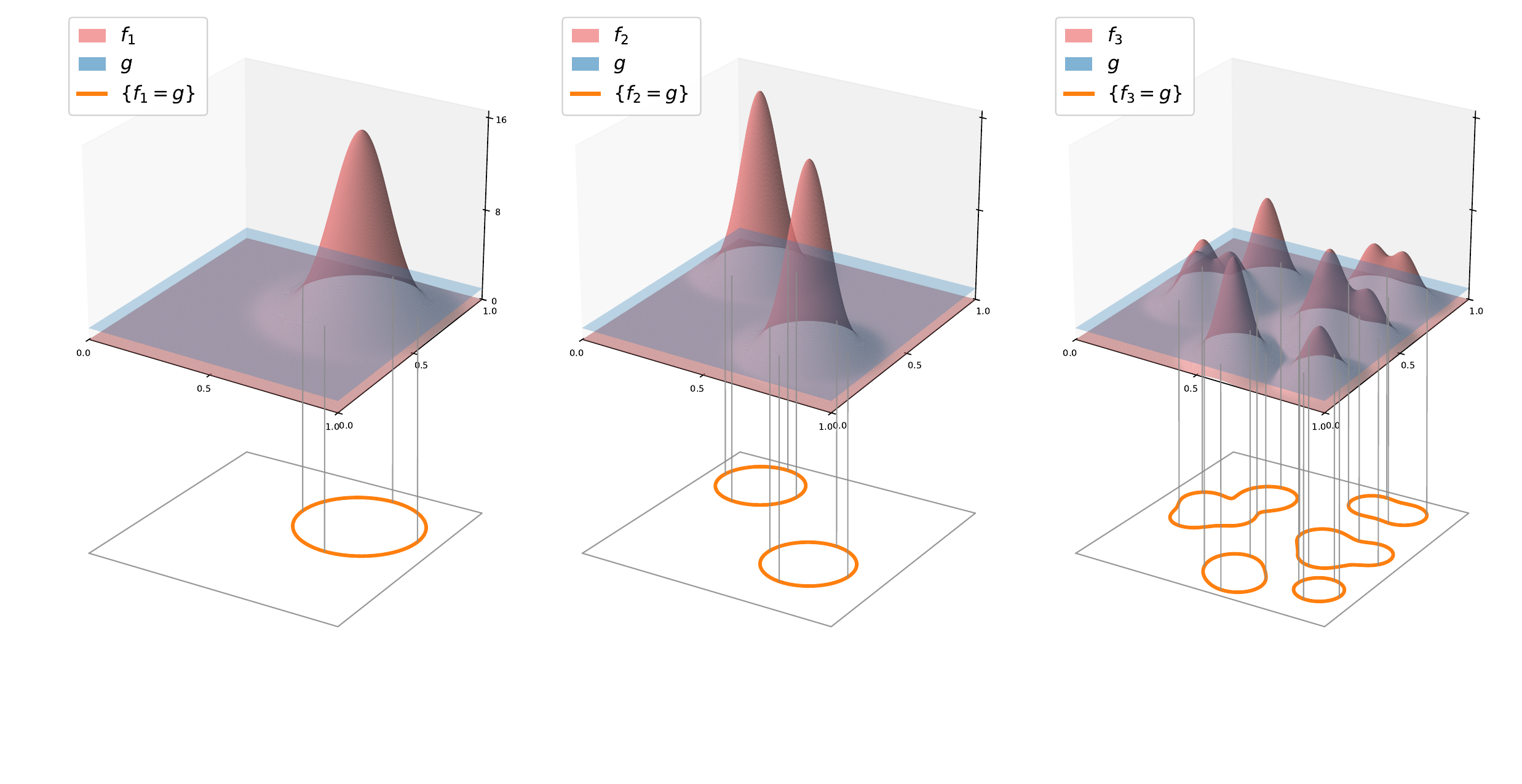}\vspace{-2em}
    \caption{Increasingly oscillatory densities $f_1,f_2,f_3$ (red) around the uniform $g$ on $[0,1]^2$ (blue). As oscillatory behavior builds up, the passing set $\{x\in\RR^2 : f_j(x)=g(x)\}$ (projected in orange onto the plane below each surface) gains arc length, the two-dimensional analogue of an increasing number of oscillations in one dimension.}
    \label{fig:2d_oscillations}
\end{figure}

Hence, using classical tools from geometric measure theory to extend the notion of oscillations to the multidimensional setting, our main contribution will be to show that if $(F_j)_{j\in\mathbb N}$ is a sequence of probability measures, all admitting a density $f_j$ with respect to the Lebesgue measure, that converges weakly to $G$ (with density $g$) but stays bounded away from it in total variation distance, then the sets at which the  densities $f_j$ pass from below to above $g$ must increase unboundedly in size, as measured by the $(d-1)$-dimensional Hausdorff measure (Theorem~\ref{thm:oscillations_high_dim_hausdorff}). In other words, just as \cite{bariletto2025identifiability} show that in one dimension the number of oscillations of $f_j$ around $g$ increases to infinity, we show that the size of an appropriate passing set diverges to infinity in arbitrary dimensions. Importantly, this notion of size can be formally related, when $d=1$, to the number of oscillations considered by \cite{bariletto2025identifiability}, of which it constitutes a meaningful higher-dimensional generalization (Proposition~\ref{prop:d1_recovery}). We further show that this divergence persists even after intersecting the passing sets with a fixed compact set (Theorem~\ref{thm:oscillations_high_dim_hausdorff_localized}), so that the uncovered growth in size of the passing sets represents a genuine increase in their geometric complexity rather than a balloon-like volume expansion in ambient space.

To compare with the one-dimensional result, we illustrate the phenomenon with two sequences of measures in $\mathbb R^2$. For the first sequence, the passing sets consist of a single connected component of increasing complexity. This defies the one-dimensional intuition that it must be the \emph{number} of sign changes of $f_j-g$ that diverges, and may be intuitively understood as follows: while the sequence of densities needs to wiggle ever more pronouncedly around the limit in order for total convergence to fail, in $d\geq 2$ there is more than one (indeed, an infinite number of) directions along which such densities may wiggle in the one-dimensional sense of an increasing number of oscillations; because this increase need not happen in every direction, it may be the case that the passing sets, while increasing in complexity, stay nonetheless connected in a single (or a bounded number of) components. This example shows that the basic one-dimensional picture fails in general in two dimensions or more, although we provide sufficient conditions under which it persists (Proposition~\ref{pro:components_diverge}). The second sequence, by contrast, exemplifies a multidimensional scenario in which the number of components of the passing sets grows without bound, demonstrating that this behavior, while not necessary, may occur in specific cases.

The rest of the article is organized as follows. Section~\ref{sec:notation} fixes the notation and collects the notions from probability theory and geometric measure theory that we require. Section~\ref{sec:from1d} reviews the one-dimensional result of \cite{bariletto2025identifiability}, discusses why its proof technique does not extend beyond the real line, and introduces the sets whose size we measure in the general case. Section~\ref{sec:main result} contains the main results, Section~\ref{sec:example} discusses examples, and Section~\ref{sec:conclusion} concludes.

\section{Preliminaries}\label{sec:notation}

\subsection{Basic notation}

Throughout, $d \in \NN$ denotes the ambient dimension and $\mathcal{B}(\RR^d)$ the Borel
$\sigma$-algebra on $\RR^d$. We write $\lambda_d$ for the $d$-dimensional Lebesgue measure on
$\RR^d$, $|\cdot|$ for the Euclidean norm, and $B(x,r)$ for the closed Euclidean ball of radius
$r>0$ centred at $x\in\mathbb R^d$. For a set $E \subseteq \RR^d$, we denote by $\overline{E}$ its closure, by
$E^c$ its complement, and by $\diam(E) := \sup_{x,y \in E}|x-y|$ its diameter. Probability
measures on $(\RR^d, \mathcal{B}(\RR^d))$ are denoted by capital letters $F, G, F_1, F_2, \dots$
and, when they are absolutely continuous with respect to $\lambda_d$, their densities are denoted
by the corresponding lower-case letters $f, g, f_1, f_2, \dots$. Integrals with respect to
$\lambda_d$ on $E\in\mathcal B(\mathbb R^d)$ are written $\int_E h(x)\,dx$ (with the convention that, when $E=\mathbb R^d$, it may be omitted from the notation). Finally, $1_E$ denotes the indicator function of the set $E$, and
for sequences $(a_j)_{j \in \NN}$, $(b_j)_{j \in \NN}$ we write $a_j = o(b_j)$ if
$a_j/b_j \to 0$ as $j \to \infty$.

% Standard background on measure theory and on the Lebesgue measure can be found in
% \citet{folland1999}; for the geometric-measure-theoretic notions used below we refer to
% \citet{evans2015measure}, \citet{maggi2012} and \citet{federer1969}.

\subsection{Probability metrics}

We work with two metrics on the space of probability measures on $\RR^d$. The first is the
\emph{total variation metric}
\begin{equation*}
d_{TV}(F,G) := \sup_{A \in \mathcal{B}(\RR^d)} |F(A) - G(A)|,
\end{equation*}
which, when $F$ and $G$ admit densities $f$ and $g$, satisfies the identity
$$d_{TV}(F,G) = \tfrac{1}{2}\int |f(x)-g(x)|\,dx$$ and is attained on the set
$\{x \in \RR^d : f(x) > g(x)\}$. The second is the \emph{L\'evy--Prokhorov metric}. For
$A \in \mathcal{B}(\RR^d)$ and $\delta > 0$, let
\begin{equation*}
A^{\delta} := \{x \in \RR^d : d_A(x) < \delta\}, \qquad
d_A(x) := \inf_{y \in A}|x-y|,
\end{equation*}
denote the open $\delta$-enlargement of $A$ and the Euclidean distance function to $A$,
respectively. The L\'evy--Prokhorov distance between $F$ and $G$ then is
\begin{align*}
d_w(F,G) := \inf\Bigl\{\delta > 0  :\; & F(A) \leq G(A^{\delta}) + \delta \text{ and }\\
&
G(A) \leq F(A^{\delta}) + \delta \text{ for all } A \in \mathcal{B}(\RR^d)\Bigr\}.
\end{align*}
On a separable metric space, $d_w$ metrizes weak convergence \citep{billingsley1999} and $d_{TV}$ dominates $d_w$ \citep{gibbs2002choosing}. Moreover, no general reverse inequality exists, providing a simple proof of the fact that total variation convergence is stronger in general than weak convergence; the discrepancy between these two notions of convergence is precisely the focus of this work.

\subsection{Hausdorff measure}

For any $E \in\mathcal B(\RR^d)$ and $r\geq 0$, the \emph{$r$-dimensional Hausdorff measure} of $E$ is defined as
\begin{equation*}
\mathcal{H}^{r}(E) := \lim_{\delta \to 0}\; \inf \left\{ \sum_{i=1}^\infty (\diam(U_i))^{r} : E \subseteq \bigcup_{i=1}^\infty U_i,\, \diam(U_i) < \delta \right\},
\end{equation*}
where the infimum is taken over all countable covers $(U_i)_{i\in\NN}$ of $E$ comprising sets $U_i$ all of
diameter less than $\delta>0$; see, for instance, Chapter 2 of \citet{evans2015measure}.

The case $r=d-1$ will be particularly relevant for our analysis. In fact, the sets we shall measure by means of $\mathcal H^{d-1}$ are formally subsets of $\RR^d$ but
correspond to $(d-1)$-dimensional boundary surfaces. In this scenario, applying the $d$-dimensional Lebesgue measure $\lambda_d$ to such
boundaries would trivially yield zero, and at the same time, one cannot use the $(d-1)$-dimensional
Lebesgue measure $\lambda_{d-1}$, as it is only properly defined for subsets of flat spaces such as
$\RR^{d-1}$ itself. The Hausdorff measure $\mathcal{H}^{d-1}$ resolves this by rigorously capturing the ``$(d-1)$-dimensional volume'' (e.g., arc length in two dimensions, surface area in three dimensions, etc.) of
potentially curved sets embedded in higher-dimensional space. In fact, when multiplied by
the geometric constant
$c_{r} := \pi^{r/2}[2^{r}\Gamma(r/2 + 1)]^{-1}$
evaluated at $r=d-1$, the measure $\mathcal H^{d-1}$ on $\mathbb R^{d-1}$
coincides with the Lebesgue measure on that same space:
\begin{equation*}
\lambda_{d-1}(E)=c_{d-1}\mathcal{H}^{d-1}(E) \quad \text{for all } E \subset \RR^{d-1};
\end{equation*}
see again Chapter 2 of \citet{evans2015measure}. Because of this direct correspondence,
$\mathcal{H}^{d-1}$ serves as a natural analogue of the Lebesgue measure for the
sets we wish to measure. Relatedly, $\mathcal H^{0}$ is easily seen to reduce to the
counting measure, so that measuring a subset of $\RR$ in the sense of $\mathcal H^0$ amounts to counting its
elements; this provides, as will highlighted by Proposition~\ref{prop:d1_recovery}, a precise connection to the one-dimensional result on the diverging number of oscillations by \cite{bariletto2025identifiability}.

\subsection{Coarea formula}

Finally, a key step in our proofs relies on the \emph{coarea formula} for Lipschitz continuous functions
\citep[see Chapter 3 of][]{evans2015measure}, which we state here for future reference: if
$u : \RR^d \to \RR$ is Lipschitz continuous and $E \in \mathcal B(\RR^d)$, then
\begin{equation}\label{eq:coarea}
\int_E |\nabla u(x)| \, dx = c_{d-1}\int_{\RR} \mathcal{H}^{d-1}\bigl(E \cap u^{-1}(t)\bigr)\,dt,
\end{equation}
where $\nabla u$ denotes the gradient of the function $u$ and $u^{-1}(t):=\{x\in\mathbb R^d : u(x)=t\}$ for all $t\in\mathbb R$.

The easiest way to visualize the content of the coarea formula is to focus
on the case $d=1$, so that $u:\RR\to\RR$, such as the function depicted in
Figure~\ref{fig:coarea}. Because $c_{d-1}=c_0=1$ and $\mathcal H^{d-1}=\mathcal H^0$
counts the elements of a set $E\in\mathcal B(\RR)$, the coarea formula in this simple
case states that integrating the size of the infinitesimal variation
$|\nabla u(x)|\equiv|\mathrm du(x)/\mathrm dx|$ over $x\in E$ is equivalent to
integrating the cardinality of the level sets $\{x\in\RR:u(x)=t\}$ over $t\in u(E)$.
Intuitively, a region where $u$ varies steeply contributes a large amount to the
left-hand side and is traversed by many level sets, hence adding to their cardinalities and therefore to the right-hand side of the formula. For $d>1$, the content of the formula is unchanged, up to the
dimension-dependent scaling factor $c_{d-1}$ and the reinterpretation of
$\mathcal H^{d-1}$ as the appropriate notion of $(d-1)$-dimensional size of the level
sets $u^{-1}(t)$.

\begin{figure}[t]
    \centering
    \includegraphics[width=0.8\linewidth]{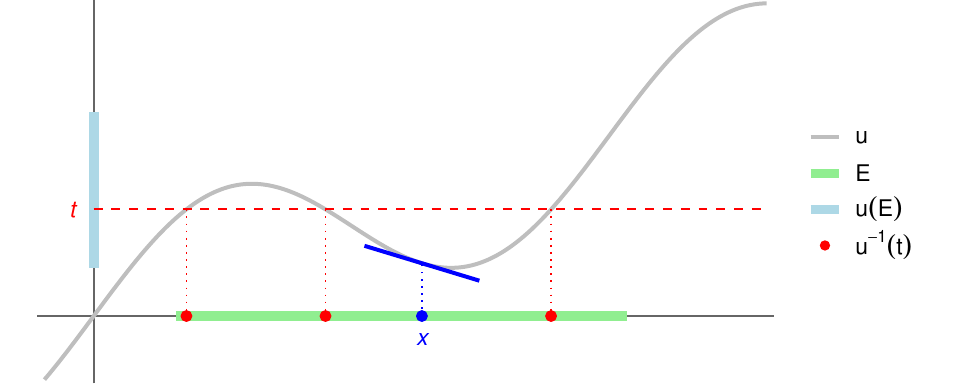}
    \caption{The coarea formula in one dimension. The graph of a Lipschitz continuous function
    $u:\RR\to\RR$ is shown in grey, with the set $E$ (green) marked on the
    horizontal axis and its image $u(E)$ (light blue) on the vertical axis. The tangent
    at a point $x$ (dark blue) has absolute slope $|\nabla u(x)|$. The horizontal line at level
    $t$ meets the graph of $u$ at three points, giving rise to the level set $u^{-1}(t)$
    marked on the domain axis (red). The coarea formula states that integrating $|\nabla u(x)|$ over $x\in E$ is equivalent to integrating the cardinality of the set $u^{-1}(t)$ over $t\in u(E)$.}
    \label{fig:coarea}
\end{figure}

\section{From one to arbitrary dimensions}\label{sec:from1d}

The study of conditions under which weak convergence of probability measures implies convergence
in total variation has a long history. For instance, \cite{hettmansperger1974note} showed that if
a sequence of distributions on $\RR$ converges weakly and the associated densities are unimodal,
then the densities converge to the density of the limiting distribution. \cite{dellacherie1978probabilities} later required $\liminf_{j\to\infty}f_j(x)\geq g(x)$ almost everywhere, while \cite{visintin1984strong} gave convexity conditions at the level of
the densities under which weak convergence implies total variation convergence. More recently,
\cite{walker2023comparing} relaxed unimodality to a finite bound on the number of modes along the
sequence, while \cite{mynbaev2026equivalence} further weakened Walker's conditions to only require total variation equicontinuity of the sequence of densities. It should be noted that these results only apply to sequences of measures on the real line, where modes
and related concepts admit an elementary description that is lacking in higher dimensions without imposing strong regularity conditions on the densities under consideration.

A related strand of work has shown that sufficient regularity of the sequence of densities yields
inverse bounds involving the Wasserstein distances \citep{Villani2009} rather than the
Lévy--Prokhorov metric, the former also metrising weak convergence provided additional moment
conditions hold. In particular, while the total variation distance cannot be upper-bounded by the
Wasserstein metrics in general, \cite{chae2020wasserstein} showed that for sufficiently smooth
densities it is in fact bounded by a power of the Wasserstein distance, with the bound depending on the
Sobolev norms of the densities. \cite{chae2024wasserstein} extended this to $L^p$-distances
between multivariate densities whose smoothness is measured in Besov norms. These results may be
intuitively linked to the oscillatory behavior of the sequence of densities, as the smoothness
requirements can be seen to serve the purpose of limiting precisely this kind of behavior and
ensuring a stronger form of convergence as a consequence.

\subsection{The one-dimensional case: oscillations}

A recent contribution to the topic is found in \cite{bariletto2025identifiability}. In that work, it was shown that, under openness of the set
$\{x\in\RR : f_j(x)>g(x)\}$, $F_j\to G$ weakly as $j\to\infty$, in conjunction with $d_{TV}(F_j,G)\geq \varepsilon$
for all $j\in\NN$, implies that the number of intervals making up that open set must
diverge to infinity.\footnote{We briefly note that the condition $d_{TV}(F_j,G)\geq \varepsilon$
for all $j\in\NN$ and some $\varepsilon>0$, which we also adopt in our analysis, is stronger than non-convergence of $F_j$ to $G$ in total variation. However, to ease notation, we work under this more restrictive scenario with the understanding that, in the general case of total variation non-convergence, our results apply along a subsequence.} The number of such intervals intuitively coincides with the number of
oscillations of $f_j$ around $g$, so that the sequence of densities $f_j$ must oscillate
arbitrarily often around the target $g$ in order for weak convergence (i.e., convergence of
integrals of appropriate test functions) to happen detached from total variation convergence. A classic example of this behavior, which \cite{bariletto2025identifiability} revisited in the context of Bayesian asymptotics, is given by the sequence of densities $f_j(x)=(1+\cos(2\pi jx))1_{[0,1]}(x)$, which oscillates with diverging frequency as $j\to\infty$, precluding total variation convergence, while the associated sequence of probability measures converges weakly to the uniform measure on $[0,1]$; see again Figure~\ref{fig:cosine_1d} for a visualization.

The proof techniques of \cite{bariletto2025identifiability} rely on the unique structure of
the real line, in particular on the fact that any open set in $\RR$, including
$\{x\in\RR : f_j(x)>g(x)\}$, can be written as the countable union of disjoint open intervals, and each interval is intuitively identified with an oscillation of $f_j$ around $g$. No analogous
decomposition is available in $\RR^d$ for $d\geq 2$, and, more fundamentally, the \emph{number} of oscillations that
the one-dimensional theorem characterizes is no longer a general enough measure of irregular or oscillatory behavior. Our upcoming results will clarify this point, and the examples of Section~\ref{sec:example}
will illustrate it with concrete sequences of densities.

\subsection{The multi-dimensional case: passing sets}

To carry out our analysis on more general Euclidean spaces, we propose to reason as follows. Still
focusing on probability measures on the real line, the number of of $f_j$ around $g$ can be
equivalently captured by the cardinality of the boundary of
$\{x\in\RR : f_j(x)>g(x)\}$, such boundary representing the set of points at which ``$f_j$ passes
$g$.'' Moreover, cardinality may be interpreted as the ``perimeter'' of such a
boundary set, which in regular cases will be $0$-dimensional and formed by a finite union of points. This view
can be translated to higher dimensions $d\in\NN$ by measuring the ``$(d-1)$-dimensional
perimeter'' of the boundary of $\{x\in\RR^d : f_j(x)>g(x)\}$ or of some closely related set, such as its closure or some appropriate enlargement thereof.

Figure~\ref{fig:2d_oscillations}, already discussed in the Introduction, presents a visual illustration with a two-dimensional case in which $f_j\neq g$ almost everywhere, where such a perimeter corresponds
to the arc length of the set $\{x\in\mathbb R^2 : f_j(x)=g(x)\}$. Importantly, while in Figure~\ref{fig:2d_oscillations} the boundary does fragment into an increasing number of disjoint components, one of the illustrative sequences in Section~\ref{sec:example} will clarify that this need not be the case when $d\geq 2$, and that the more meaningful measure of oscillatory behavior increasing when $F_j\to G$ weakly but not in total variation is precisely the perimeter of appropriately defined boundary sets, analogous the one pictured in Figure~\ref{fig:2d_oscillations}.

Before turning to a formal analysis, two further points need care. The first concerns the kind of probability measures on which our analysis will focus, for which a minimal regularity condition, embodied in the next definition, is required.

\begin{definition}\label{def:tv_regular}
A pair of probability measures $(F, G)$ on $\RR^d$ is called \emph{TV-regular} if there
exists a closed set $A \in \mathcal B(\RR^d)$ that achieves the total variation distance between
them, meaning $d_{TV}(F, G) = F(A) - G(A)$. When $F$ and $G$ admit densities $f$ and $g$, we write
$A(f,g)$ for one such set (which is understood to be arbitrarily chosen unless otherwise specified).
\end{definition}

While TV-regularity may at first appear arbitrary or restrictive, note that for measures
$F$ and $G$ with densities $f$ and $g$ the supremum defining $d_{TV}(F,G)$ is always attained on the set
$C(f,g):=\{x \in \RR^d : f(x) > g(x)\}$, so that the condition only requires this set to be replaceable by
a closed one. That is the case, for instance, whenever the boundary of
$\overline{C(f,g)}$ is Lebesgue-null, since $\overline{C(f,g)}$ then differs from $C(f,g)$
by a null set and attains the supremum as well. It is also the case whenever the
difference $h := f-g$ is continuous almost everywhere; that is, for TV-regularity to fail, $f-g$ needs to be discontinuous on a set of positive Lebesgue measure, which implies a high degree of pathology. To see this fact, let $W:=\{x\in\mathbb R^d : h(x)<0\}^\circ$, so that $W^c=\overline{\{x\in\mathbb R^d : h(x)\geq0\}}$. If
$h$ is continuous at a point $x$ with $h(x)<0$, then $h$ is strictly negative on an whole neighborhood of
$x$, so that $x\in W$; then $\{x\in\mathbb R^d : h(x)<0\}\cap W^c$ is contained in the
set of discontinuity points of $h$ and is consequently $\lambda_d$-null. Therefore
\begin{align*}
F(W^c)-G(W^c) & =\int_{W^c} h(x)\,dx\\
&=\int_{\{h>0\}} h(x)\,dx+ \int_{\{h=0\}} h(x)\,dx + \int_{\{h<0\}\cap W^c} h(x)\,dx
\\
&=\int_{\{h>0\}} h(x)\,dx \\
& = d_{TV}(F,G),
\end{align*}
showing that the closed set $W^c$ achieves the supremum in the total variation distance definition. Hence, in particular, TV-regularity
holds for continuous and piecewise-continuous densities, and more broadly for the pairs of
densities typically encountered in applications. Accordingly, while we retain the notion of
TV-regularity for the sake of formal generality, the set $A(f,g)$ will be interpreted almost without loss of generality as the region on which $f$ exceeds
$g$, that being the mechanism by which $d_{TV}(F,G)$ is attained on $A(f,g)$.

The second point concerns the boundary set that we aim to measure, which we replace by a level set of the
distance function to $A(f,g)$ at a strictly positive value, termed a \emph{passing set}.

\begin{definition}\label{def:passing_set}
Let $f$ and $g$ be probability densities on $\RR^d$ such that the associated pair of
distributions is TV-regular, and let $t>0$. The \emph{$(f,g,t)$-passing set} is defined as
\begin{equation*}
P_t(f,g) := \{x\in\RR^d : d_{A(f,g)}(x) = t\}.
\end{equation*}
\end{definition}

That is, given the running interpretation of the set $A(f,g)$, the passing set $P_t(f,g)$ lies at distance exactly $t$ from the set on which $f$ exceeds $g$, and
therefore, for small $t$, it tightly approximates the set at which $f$ passes, or goes from lying below to lying above, $g$.

%describes the geometry of the latter at resolution $t$: features separated by a distance
%smaller than $2t$ are not resolved. In the results below, the resolution $t$ is taken to vanish
%along the sequence, so that the geometry is inspected at increasingly fine scales, and no
%assumption beyond closedness of $A(f,g)$ is needed.

As already mentioned, our goal is to generalize the one-dimensional analysis linking the discrepancy between weak and total variation convergence of a sequence $F_j$ to $G$ with a diverging number of oscillations of $f_j$ around $g$. To that end, the next result formalizes the connection between the number of such oscillations and the measure $\mathcal H^0$ of an appropriate passing set: the two quantities bound each other up to additive or multiplicative constants, so that either diverges if and only if the other does.

\begin{proposition}\label{prop:d1_recovery}
Let $d=1$, $A\subset\RR$ non-empty and closed, $t>0$, and let
$N_t\in\NN\cup\{\infty\}$ denote the number of connected components of the open enlargement $A^t$. Then
\begin{equation*}
N_t - 1 \leq \mathcal H^{0}(\{x\in\RR : d_A(x)=t\}) \leq 2 N_t,
\end{equation*}
with the convention that both sides are infinite when $N_t=\infty$.
\end{proposition}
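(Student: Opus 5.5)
The plan is to compare the level set $S_t:=\{x\in\RR : d_A(x)=t\}$ with the boundary of the open enlargement $A^t=\{d_A<t\}$, and to read off the count from the interval structure of $A^t$. Since $A$ is non-empty, $d_A$ is finite and $1$-Lipschitz. Hence $A^t$ is open and non-empty, and it decomposes uniquely as a disjoint union of $N_t$ open intervals $(a_k,b_k)$, where the endpoints may be infinite.

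First we show $\partial A^t\subseteq S_t$. A finite endpoint $a_k$ is not in $A^t$, so $d_A(a_k)\ge t$. It is also a limit of points where $d_A<t$, so continuity gives $d_A(a_k)\le t$. Thus $d_A(a_k)=t$. Since each component contributes at most two finite endpoints, this already suggests that $2N_t$ is the right order of magnitude.

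\emph{Upper bound.} We need the reverse inclusion $S_t\subseteq \partial A^t$. Take $x$ with $d_A(x)=t$. Because $A$ is closed and non-empty, some $a\in A$ satisfies $|x-a|=t$. Every point strictly between $a$ and $x$ lies at distance less than $t$ from $a$, so it belongs to $A^t$. Hence $x$ is a limit of points of $A^t$, but $x\notin A^t$, so $x\in\partial A^t$. The boundary of an open subset of $\RR$ consists exactly of the finite endpoints of its component intervals, so
\[
\mathcal H^0(S_t)=\#\partial A^t\le 2N_t .
\]

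\emph{Lower bound.} List the components in increasing order. Between two consecutive components there is a gap, and the right endpoint of the left component is finite. Distinct components have distinct right endpoints, and the only component whose right endpoint may fail to be finite is the rightmost one, if it is unbounded. Therefore at least $N_t-1$ distinct finite endpoints exist, which gives $\mathcal H^0(S_t)\ge N_t-1$.

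\emph{Infinite case.} If $N_t=\infty$, infinitely many components have finite right endpoints. The same injection then shows $\mathcal H^0(S_t)=\infty$, consistent with the stated convention.

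The main obstacle is the inclusion $S_t\subseteq\partial A^t$. It relies on the distance being attained, which is where closedness of $A$ enters. It also relies on the segment argument, which in one dimension is immediate. The rest is bookkeeping about endpoints of open sets in $\RR$.
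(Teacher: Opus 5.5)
Your route matches the paper's. You identify the level set $S_t$ with the set of finite endpoints of the components of $A^t$ and then count endpoints. Your lower bound (at most one component is unbounded above, and distinct components cannot share a finite right endpoint) is a slightly more direct version of the paper's count of component--endpoint pairs.

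There is one false step. You claim that the boundary of an open subset of $\RR$ consists exactly of the finite endpoints of its components, and this fails in general. For $U=\bigcup_{n\ge 1}(1/(n+1),1/n)$, the point $0\in\partial U$ is an endpoint of no component. For the complement of the Cantor set, the boundary is uncountable while the endpoints are countable. So $S_t\subseteq\partial A^t$ alone does not give $\mathcal H^0(S_t)\le 2N_t$ without a further argument.

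The damage is limited. When $N_t=\infty$ the upper bound is vacuous. When $N_t<\infty$, the closure of a finite union of intervals is the union of their closures, so the boundary does lie in the set of at most $2N_t$ finite endpoints.

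The cleaner repair, which is what the paper does, is to finish your own segment argument. The open segment between $x$ and its nearest point $a\in A$ is a non-empty connected subset of $A^t$, so it lies in a single component $(\alpha,\beta)$. Since $x\notin A^t$ but $x$ lies in the closure of that segment, $x\in\{\alpha,\beta\}$. This shows directly that every point of $S_t$ is a finite endpoint. In other words, enlargements $A^t$ of closed sets, unlike general open sets, have no boundary points other than endpoints, and the upper bound then follows for every $N_t$.

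Similarly, your phrase ``list the components in increasing order'' only makes sense for finite $N_t$. The injection you actually use needs no ordering, so nothing breaks there.
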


\begin{proof}
Write $L := \{x \in \RR : d_A(x) = t\}$ and recall that $\mathcal H^{0}$ coincides with the counting measure, so that the claim concerns the number of elements of $L$. Since $d_A$ is continuous, $A^{t}$ is open, so that the disjoint countable union of its connected components, each of which is a non-empty open interval, is well defined.% We show that $L$ is precisely the set of finite endpoints of these components, and then count.

Let $a$ be a finite endpoint of a component $U$ of $A^t$. Then $a\notin A^t$, so that $d_A(a)\geq t$, while approaching $a$ from within $U$ and using the continuity of $d_A$ gives $d_A(a)\leq t$. Hence $a\in L$. Conversely, let $x\in L$. Because $A$ is closed, the infimum defining $d_A(x)$ is attained at some $y\in A$ with $|x-y|=t$, and we may assume $y>x$, the other case being symmetric. Writing $x_s := x+s(y-x)$ for $s\in(0,1]$, we have
$
d_A(x_s) \leq |x_s - y| = (1-s)\,t < t ,
$
so that $(x,y]\subseteq A^{t}$. Being connected, $(x,y]$ lies in a single component $U=(a,b)$ of $A^t$, whence $a\leq x<y\leq b$. Since $d_A(x)=t$, the point $x$ does not belong to $A^t$ and therefore not to $(a,b)$; as $x<b$, this forces $a=x$, so that $x$ is the left endpoint of $U$. In the symmetric case $y<x$ one finds that $x$ is the right endpoint of the component containing $[y,x)$.

We are left to compare cardinalities. Let $M$ denote the number of pairs $(U,x)$ with $U$ a component of $A^t$ and $x$ one of its finite endpoints. Every component, being an open interval, has at most two finite endpoints, so $M\leq 2N_t$; and every $x\in L$ occurs in at most two such pairs, once as a right endpoint and once as a left endpoint, so $M\leq 2\,\mathcal H^0(L)$. By the two paragraphs above, every finite endpoint belongs to $L$ and every element of $L$ is a finite endpoint, so the first bound also reads $\mathcal H^0(L)\leq M\leq 2N_t$, which is the upper bound in the statement. For the lower bound, note that a component fails to contribute two finite endpoints only when it is unbounded, and $A^t$ admits at most one component unbounded from below and at most one unbounded from above, each such component losing a single endpoint. Hence $M\geq 2N_t-2$, and combining this with $M\leq 2\,\mathcal H^0(L)$ yields $N_t-1\leq \mathcal H^0(L)$.
\end{proof}

Consequently, in dimension one, divergence of $\mathcal H^{0}(P_{t_j}(f_j,g))$ along a vanishing sequence $t_j$, which is the form that our upcoming results take, is equivalent to divergence of the number of connected components of $A(f_j,g)^{t_j}$, that is, of the number of intervals composing the set on which $f_j$ exceeds $g$ (modulo a vanishing enlargement). In this sense, when $d=1$, the results of Section~\ref{sec:main result} return a version of the statement of \cite{bariletto2025identifiability} about oscillatory densities.

\section{Main results}\label{sec:main result}

We are now in a position to begin our formal analysis. The following theorem constitutes the main result of the article.

\begin{theorem}\label{thm:oscillations_high_dim_hausdorff}
Let $g, f_1, f_2, \dots$ be probability densities on $\RR^d$ with corresponding probability
distributions $G, F_1, F_2, \dots$. For each $j \in \NN$, assume that the pair $(F_j, G)$ is
TV-regular, and let $A_j := A(f_j,g)$. Suppose that, for some $\varepsilon>0$,
\begin{enumerate}
    \item[(i)] $d_{TV}(F_j, G) \geq \varepsilon$ for all $j \in \NN$,
    \item[(ii)] $d_w(F_j, G) \to 0$ as $j \to \infty$.
\end{enumerate}
Then, for any sequence $\delta_j > d_w(F_j, G)$ such that $\lim_{j \to \infty} \delta_j = 0$,
there exists  $t_j \in (0, \delta_j)$ such that
\begin{equation*}
\lim_{j \to \infty} \mathcal{H}^{d-1}\bigl(P_{t_j}(f_j,g)\bigr) = \infty.
\end{equation*}
\end{theorem}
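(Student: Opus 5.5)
The plan is to turn the lack of total variation convergence into a lower bound on the $G$-mass of a thin shell around $A_j$, and then convert that mass into perimeter through the coarea formula \eqref{eq:coarea} applied to the distance function $u_j := d_{A_j}$.

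\emph{Step 1: the shell carries mass.} Write $S_j := A_j^{\delta_j}\setminus A_j = \{x : 0 < d_{A_j}(x) < \delta_j\}$; this identity holds because $A_j$ is closed, so $d_{A_j}$ vanishes exactly on $A_j$. TV-regularity and (i) give $F_j(A_j) - G(A_j) = d_{TV}(F_j,G) \geq \varepsilon$. Since $\delta_j > d_w(F_j,G)$, the definition of the Lévy--Prokhorov distance (the defining set of admissible $\delta$'s is upward closed) gives $F_j(A_j) \leq G(A_j^{\delta_j}) + \delta_j$. Combining the two,
\begin{equation*}
G(S_j) = G(A_j^{\delta_j}) - G(A_j) \geq \varepsilon - \delta_j \geq \varepsilon/2
\end{equation*}
for all $j$ large enough.

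\emph{Step 2: from $G$-mass to Lebesgue measure.} The single density $g$ is integrable, so $G$ is absolutely continuous with respect to $\lambda_d$ in the $\varepsilon$--$\eta$ sense. Hence there is $\eta>0$, independent of $j$, such that $\lambda_d(E) < \eta$ implies $G(E) < \varepsilon/2$. By Step 1, $\lambda_d(S_j) \geq \eta$ for all large $j$. This is the point where no boundedness of $g$ is needed, and I expect it to be the only genuinely delicate step: the mass could a priori concentrate on shells of vanishing volume, and uniform absolute continuity of the fixed measure $G$ is exactly what rules this out.

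\emph{Step 3: coarea.} The function $u_j$ is $1$-Lipschitz, and it is standard that $|\nabla u_j| = 1$ almost everywhere on the complement of the closed set $A_j$. Applying \eqref{eq:coarea} with $E = S_j$ gives
\begin{equation*}
\eta \leq \lambda_d(S_j) = \int_{S_j} |\nabla u_j(x)|\,dx = c_{d-1}\int_0^{\delta_j} \mathcal H^{d-1}\bigl(P_t(f_j,g)\bigr)\,dt .
\end{equation*}
Here we use $S_j \cap u_j^{-1}(t) = P_t(f_j,g)$ for $t \in (0,\delta_j)$, and that this intersection is empty otherwise. The coarea formula also yields measurability of $t \mapsto \mathcal H^{d-1}(P_t)$. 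Since the integrand is nonnegative on an interval of length $\delta_j$, it cannot lie below its average everywhere. So there is $t_j \in (0,\delta_j)$ with
\begin{equation*}
\mathcal H^{d-1}(P_{t_j}(f_j,g)) \geq \frac{\eta}{c_{d-1}\delta_j} \to \infty .
\end{equation*}
For the finitely many early indices, $t_j \in (0,\delta_j)$ may be chosen arbitrarily, since this does not affect the limit, and the claim follows.
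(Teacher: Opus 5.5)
Your proposal is correct and follows essentially the same route as the paper: the shell bound $G(S_j)\geq\varepsilon-\delta_j$ from TV-regularity and the L\'evy--Prokhorov inequality, then a lower bound on $\lambda_d(S_j)$, then the coarea formula with $|\nabla d_{A_j}|=1$ a.e.\ off $A_j$, and finally averaging in $t$. The one difference is cosmetic: where you invoke uniform absolute continuity of $G$ with respect to $\lambda_d$, the paper truncates at a level $M$ with $G(\{g>M\})\leq\varepsilon/2$ to get $\lambda_d(S_j)\geq(\varepsilon/2-\delta_j)/M$, which is the standard proof of that same fact and gives the same $1/\delta_j$ rate.
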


The proof combines an application of the coarea formula \eqref{eq:coarea} with the following property of Euclidean
distance functions.

\begin{lemma}\label{lem:eikonal}
Let $A \subset \RR^d$ be a non-empty \emph{closed} set, and let $d_A: \RR^d \to [0,\infty)$ be
the Euclidean distance function to $A$, defined by
\begin{equation*}
d_A(x) := \inf_{y \in A} |x - y|.
\end{equation*}
Then $d_A$ is $1$-Lipschitz continuous on $\RR^d$, differentiable $\lambda_d$-almost everywhere, and satisfies
\begin{equation*}
|\nabla d_A(x)| = 1 \quad \text{for almost every } x \in \RR^d \setminus A.
\end{equation*}
\end{lemma}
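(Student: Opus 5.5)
The plan is to establish the three claims in turn: the Lipschitz bound, a.e. differentiability, and the eikonal identity.

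\emph{Lipschitz bound.} Fix $x,z\in\RR^d$. For every $y\in A$ the triangle inequality gives $d_A(x)\le |x-y|\le |x-z|+|z-y|$. Taking the infimum over $y\in A$ yields $d_A(x)\le |x-z|+d_A(z)$. Exchanging the roles of $x$ and $z$ gives $|d_A(x)-d_A(z)|\le|x-z|$.

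\emph{Differentiability.} This follows at once from Rademacher's theorem, since $d_A$ is Lipschitz. The same argument shows $|\nabla d_A(x)|\le 1$ wherever the gradient exists, because every directional derivative is bounded by the Lipschitz constant $1$.

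\emph{Lower bound off $A$.} It remains to show $|\nabla d_A(x)|\ge 1$ at every point $x\notin A$ where $d_A$ is differentiable. The complement of $A$ is open, so the full-measure set of differentiability points restricted to $\RR^d\setminus A$ is all that matters. Fix such an $x$ and set $r:=d_A(x)>0$. Because $A$ is closed and non-empty, the infimum is attained at some $y\in A$ with $|x-y|=r$; closed and bounded truncations of $A$ are compact, which gives this. Consider the segment $x_s:=x+s(y-x)$ for $s\in[0,1]$. As in the proof of Proposition~\ref{prop:d1_recovery}, we have $d_A(x_s)\le|x_s-y|=(1-s)r$. Combined with the Lipschitz bound $d_A(x_s)\ge d_A(x)-sr=(1-s)r$, this forces $d_A(x_s)=(1-s)r$ exactly. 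Hence the directional derivative of $d_A$ at $x$ in the unit direction $e:=(y-x)/r$ equals $-1$. Differentiability at $x$ then gives $\nabla d_A(x)\cdot e=-1$, and Cauchy--Schwarz yields $|\nabla d_A(x)|\ge 1$. Together with the upper bound, $|\nabla d_A(x)|=1$.

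\emph{Main obstacle.} The only genuinely nontrivial ingredient is Rademacher's theorem, which I will cite (e.g.\ Chapter 3 of \citet{evans2015measure}). The other point that needs care is the existence of a nearest point, and closedness of $A$ is exactly what supplies it. Without closedness one could replace $A$ by $\overline A$, since $d_A=d_{\overline A}$, but the statement then refers to $\RR^d\setminus\overline A$.
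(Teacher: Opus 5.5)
Your proof is correct and follows essentially the same route as the paper: the triangle-inequality Lipschitz bound, Rademacher's theorem together with directional derivatives bounded by the Lipschitz constant for $|\nabla d_A|\le 1$, and, for $|\nabla d_A|\ge 1$, a segment toward a nearest point (obtained from compact truncations of $A$) combined with Cauchy--Schwarz. The differences are cosmetic. You differentiate along $(y-x)/r$ and get $-1$, where the paper uses the opposite direction and gets $\ge 1$. You also record the exact identity $d_A(x_s)=(1-s)r$, which is not needed but harmless. Finally, you apply Rademacher on all of $\RR^d$, which covers the a.e.\ differentiability claim on $A$ itself slightly more directly than the paper's restriction to $\RR^d\setminus A$.
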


\begin{proof}
First, we establish the $1$-Lipschitz property via the triangle inequality. For any
$x, z \in \RR^d$ and $y \in A$, we have $|x - y| \leq |x - z| + |z - y|$. Taking the infimum over
all $y \in A$ on both sides yields $d_A(x) \leq |x - z| + d_A(z)$, which implies
$d_A(x) - d_A(z) \leq |x - z|$. Reversing the roles of $x$ and $z$ gives
$|d_A(x) - d_A(z)| \leq |x - z|$.

Because $d_A$ is $1$-Lipschitz continuous on the open set $\RR^d \setminus A$, Rademacher's
Theorem guarantees that $d_A$ is differentiable almost everywhere in $\RR^d \setminus A$. Let
$x \in \RR^d \setminus A$ be a point where $\nabla d_A(x)$ exists.

\emph{Upper bound: $|\nabla d_A(x)| \leq 1$.}
Let $v \in \RR^d$ be an arbitrary unit vector ($|v| = 1$). By the definition of the directional
derivative operator $D_v$ and the Lipschitz condition, we have
\begin{equation*}
|D_v d_A(x)| = \left| \lim_{t \to 0} \frac{d_A(x + tv) - d_A(x)}{t} \right| \leq \lim_{t \to 0} \frac{|(x+tv) - x|}{|t|} = 1.
\end{equation*}
Since $D_v d_A(x) = \nabla d_A(x) \cdot v$, choosing $v = \frac{\nabla d_A(x)}{|\nabla d_A(x)|}$
(assuming $\nabla d_A(x) \neq 0$) yields
\begin{equation*}
|\nabla d_A(x)| = \nabla d_A(x) \cdot \frac{\nabla d_A(x)}{|\nabla d_A(x)|} \leq 1.
\end{equation*}

\emph{Lower bound: $|\nabla d_A(x)| \geq 1$.} Take $x \in \mathbb R^d \setminus A$ and let $r := d_A(x) > 0$. Choose a radius $R > r$ and define $K := A \cap B(x, R)$. The set $K$ is closed and bounded, hence compact. By the Extreme Value Theorem, the
continuous mapping $z \mapsto |x - z|$ attains its minimum on $K$ at some point $y \in K \subseteq A$.
Because any point $w \in A \setminus K$ satisfies $|x - w| > R > r$, no point outside $K$
can be the closest point. Thus, $y$ is a global minimizer over the entire set $A$, and
$d_A(x) = |x - y|$.

Consider a point $x_t$ on the straight line segment from $x$ to $y$, parametrized
by $t \in (0, |x-y|)$ with the unit direction vector $v = \frac{x - y}{|x - y|}$:
\begin{equation*}
x_t = x - tv.
\end{equation*}
By construction, the Euclidean distance from $x_t$ to $y$ is exactly $|x - y| - t$. Since
$y \in A$, the distance from $x_t$ to the set $A$ is bounded as
\begin{equation*}
d_A(x_t) \leq |x_t - y| = |x - y| - t = d_A(x) - t.
\end{equation*}
Rearranging terms yields
\begin{equation*}
\frac{d_A(x) - d_A(x - tv)}{t} \geq 1.
\end{equation*}
Taking the one-sided limit as $t \to 0^+$ on both sides, the left-hand side converges to the
directional derivative $D_v d_A(x)$ at $x$ along direction $v$:
\begin{equation*}
D_v d_A(x) = \nabla d_A(x) \cdot v \geq 1.
\end{equation*}
Applying the Cauchy--Schwarz inequality, the inner product is bounded as
$1 \leq \nabla d_A(x) \cdot v \leq |\nabla d_A(x)||v|$, and since $|v| = 1$, it follows that
$|\nabla d_A(x)| \geq 1$.

Finally, combining both bounds, we conclude that $|\nabla d_A(x)| = 1$ at every point of differentiability
in $\RR^d \setminus A$; this, in conjunction with the $\lambda_d$-almost everywhere differentiability of $d_A$ on that same set, finishes the proof.
\end{proof}

We can now prove the main result.

\begin{proof}[Proof of Theorem~\ref{thm:oscillations_high_dim_hausdorff}]
By assumption, $A_j$ is a closed set that achieves the total variation distance, so that
\begin{equation*}
d_{TV}(F_j, G) = F_j(A_j) - G(A_j) \geq \varepsilon.
\end{equation*}
By the definition of the L\'evy--Prokhorov distance, since $\delta_j > d_w(F_j, G)$, the
inequality $F_j(A) \leq G(A^{\delta_j}) + \delta_j$ holds for every Borel set $A$. 
Applying this to the set $A = A_j$ gives
\begin{equation*}
F_j(A_j) \leq G(A_j^{\delta_j}) + \delta_j.
\end{equation*}
Combining these inequalities, we obtain
\begin{equation*}
\varepsilon + G(A_j) \leq F_j(A_j) \leq G(A_j^{\delta_j}) + \delta_j \implies G(A_j^{\delta_j} \setminus A_j) \geq \varepsilon - \delta_j.
\end{equation*}
Notice that $A_j^{\delta_j} \setminus A_j = \{x \in \RR^d : 0 < d_{A_j}(x) < \delta_j\}=:S_j$
because $A_j$ is closed. Moreover, let $E_M:=\{x\in\RR^d : g(x)>M\}$ for all $M>0$, so that
$G(E_{M})=\int_{\RR^d}1_{E_M}(x) g(x)\,dx = 1 - \int_{\RR^d}(1-1_{E_M}(x)) g(x)\,dx\to 0$ as
$M\to\infty$ by the Monotone Convergence Theorem. Therefore, there exists $M\in(0,\infty)$ such
that $G(E_{M})\leq \varepsilon/2$, so that
\begin{align*}
    G(S_j)& = G(S_j\cap E_M) + G(S_j\cap E_M^c)\leq \frac{\varepsilon}{2} + \int_{S_j\cap E_{M}^c}g(x)\,dx \\
    &\leq \frac{\varepsilon}{2} +M\lambda_d(S_j)
\end{align*}
and
\begin{equation}\label{eq:shell_lower}
    \lambda_d(S_j) \geq \frac{\varepsilon/2 - \delta_j}{M}.
\end{equation}
Since $S_j \subseteq \RR^d \setminus A_j$ and $A_j$ is closed, Lemma~\ref{lem:eikonal} gives
$|\nabla d_{A_j}(x)| = 1$ $\lambda_d$-almost everywhere on $S_j$. We apply the coarea
formula~\eqref{eq:coarea} to the map $x \mapsto d_{A_j}(x)$ over $S_j$ to obtain
\begin{align*}
\lambda_d(S_j) & = \int_{S_j} |\nabla d_{A_j}(x)| \, dx \\
&= c_{d-1}\int_0^{\delta_j} \mathcal{H}^{d-1}(\{x \in \RR^d : d_{A_j}(x) = t\}) \, dt.
\end{align*}
A non-negative integrable function must take a value at least equal
to its average on a set of positive measure; hence, there exists $t_j \in (0, \delta_j)$ such that
\begin{equation*}
\mathcal{H}^{d-1}(\{x \in \RR^d : d_{A_j}(x) = t_j\}) \geq \frac{1}{\delta_j} \int_0^{\delta_j} \mathcal{H}^{d-1}(\{x \in \RR^d : d_{A_j}(x) = t\}) \, dt.
\end{equation*}
Substituting the coarea identity and the lower bound \eqref{eq:shell_lower}, we obtain
\begin{equation*}
\mathcal{H}^{d-1}(\{x \in \RR^d : d_{A_j}(x) = t_j\}) \geq \frac{\lambda_d(S_j)}{c_{d-1}\delta_j} \geq \frac{\varepsilon/2 - \delta_j}{M c_{d-1}\delta_j}.
\end{equation*}
Since $\delta_j \to 0$ as $j \to \infty$, the proof is complete.
\end{proof}

Before moving to the next set of results, we highlight that, as the last display equation shows, our arguments not only deliver a diverging $(d-1)$-dimensional Hausdorff measure of the sequence of $(f_j,g,t_j)$-passing sets for some vanishing $t_j$, but they also provide an explicit lower-bound on the speed at which this divergence must happen. In particular, because the last inequality applies for any $\delta_j>d_{w}(F_j,G)$, we deduce that the divergence rate must be at least as fast as the inverse of the convergence rate of $F_j$ to $G$ in the L\'evy--Prokhorov metric. In the same spirit, one can obtain an analogous divergence result for $\mathcal{H}^{d-1}(\{x \in \RR^d : d_{A_j}(x) = t_j\})$ by relaxing the condition $d_{TV}(F_j,G)\geq \varepsilon$, which features a fixed $\varepsilon>0$, to allow for a sequence $\varepsilon_j$ going to 0 at a slower rate than $d_{w}(F_j,G)$. While we do not make use of these observations in the rest of this work, they are highlighted here as they may prove useful in contexts where convergence speed is of interest.

\subsection{Localization to a compact set}\label{subsec:compact}

Theorem~\ref{thm:oscillations_high_dim_hausdorff} reveals that a $(d-1)$-dimensional measure
diverges, but on an unbounded space this alone leaves open a somewhat uninteresting explanation: the sets
$A_j$ could simply be expanding in terms of their $\lambda_d$ content, so that their boundaries grow in the way the surface of an
inflating balloon does, without any oscillatory or pathological behavior at all. Borrowing from the one-dimensional case, what we wish to capture is
instead a set that folds and loops on itself within a fixed region, that is, growth in perimeter
that is genuinely due to complexity rather than size.

A first indication that the ``inflating balloon'' mechanism behind Theorem~\ref{thm:oscillations_high_dim_hausdorff} can be ruled out is detailed in the next proposition, which shows that the bulk of the larger-than-$\varepsilon$ total variation discrepancy between $F_j$ and $G$ arises within a fixed compact set.

\begin{proposition}\label{prop:compact_mass}
Under the assumptions of Theorem~\ref{thm:oscillations_high_dim_hausdorff}, for every
$\delta \in (0,\varepsilon)$ there exists a compact set $K_\delta \subset \RR^d$ such that
\begin{equation*}
F_j(A_j \cap K_\delta) - G(A_j \cap K_\delta) \geq \varepsilon - \delta
\qquad \text{for all } j \in \NN.
\end{equation*}
\end{proposition}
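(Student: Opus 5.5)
The plan is to get this from tightness of the sequence $(F_j)$, together with a one-line decomposition of the total variation discrepancy. Since $A_j$ attains the total variation distance, for any Borel $K$ we can write
\begin{equation*}
F_j(A_j\cap K)-G(A_j\cap K) = \bigl[F_j(A_j)-G(A_j)\bigr] - F_j(A_j\cap K^c) + G(A_j\cap K^c).
\end{equation*}
The bracket is at least $\varepsilon$ by assumption (i), and the last term is non-negative. Hence
\begin{equation*}
F_j(A_j\cap K)-G(A_j\cap K)\geq \varepsilon - F_j(K^c).
\end{equation*}
It therefore suffices to find a compact $K_\delta$ with $\sup_{j} F_j(K_\delta^c)\leq \delta$. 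That is exactly uniform tightness of $(F_j)$, and no mass condition on $G$ is needed for this direction.

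To establish tightness directly from assumption (ii), I will use the L\'evy--Prokhorov inequality, mirroring its use in the proof of Theorem~\ref{thm:oscillations_high_dim_hausdorff}.
\begin{enumerate}
\item First pick $R>0$ such that $G(\{|x|>R\})\leq \delta/2$. This is possible by continuity from above of the probability measure $G$.
\item Next let $A:=\{x:|x|>R+1\}$ and note that $A^{\eta}\subseteq\{x:|x|>R\}$ whenever $\eta\leq 1$.
\item Choose $j_0$ such that $d_w(F_j,G)<\min\{\delta/2,1\}$ for all $j\geq j_0$. For each such $j$, pick $\eta_j$ with $d_w(F_j,G)<\eta_j\leq\min\{\delta/2,1\}$. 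By the definition of $d_w$,
\begin{equation*}
F_j(A)\leq G(A^{\eta_j})+\eta_j \leq \delta/2+\delta/2=\delta .
\end{equation*}
\item For the finitely many indices $j<j_0$, each single $F_j$ is a probability measure. So there are radii $R_j$ with $F_j(\{|x|>R_j\})\leq\delta$.
\end{enumerate}
Setting $K_\delta$ to be the closed ball $B(0,\max\{R+1,R_1,\dots,R_{j_0-1}\})$ gives a compact set with $F_j(K_\delta^c)\leq\delta$ for every $j$. Combined with the display above, this yields the claim.

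The only point requiring care is the handling of the infimum in the definition of $d_w$: one must pick $\eta_j$ strictly above $d_w(F_j,G)$ but not exceeding $1$, so that the enlargement stays inside $\{|x|>R\}$. Everything else is routine. Closedness of $A_j$ plays no role here beyond making $A_j\cap K_\delta$ Borel, and the restriction $\delta<\varepsilon$ only ensures the bound is non-trivial.
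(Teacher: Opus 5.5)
Your proof is correct. It has the same skeleton as the paper's: find a compact set that controls the tails of all $F_j$ uniformly, then split $A_j$ along $K_\delta$ and bound the tail contribution. The differences lie in how each of these two ingredients is handled.

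For tightness, the paper notes that weak convergence makes $\{F_j\}$ relatively compact and invokes Prokhorov's theorem. It then enlarges the resulting compact set so that it also covers $G$. You instead derive uniform tightness directly from the defining inequality of $d_w$. You apply it to $\{|x|>R+1\}$, whose $\eta$-enlargement stays inside $\{|x|>R\}$ for $\eta\le 1$, and handle the finitely many remaining indices by tightness of each individual measure. This is self-contained and avoids Prokhorov's theorem altogether. It is also quantitative: the radius is read off from the tail of $G$ and from the index beyond which $d_w(F_j,G)<\delta/2$. In spirit it matches the $d_w$ argument already used in the proof of Theorem~\ref{thm:oscillations_high_dim_hausdorff}.

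For the tail term, the paper bounds $\int_{A_j\cap K_\delta^c}(f_j-g)\,dx$ by $F_j(K_\delta^c)+G(K_\delta^c)$, using the loose step $-g\le g$. You observe that the $G$-term enters with a favourable sign and drop it, so only $\sup_j F_j(K_\delta^c)\le\delta$ is needed. This is slightly sharper, although tightness of $G$ still enters your argument through the choice of $R$.

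In short, the paper's version is shorter because it leans on a standard theorem, while yours is elementary and fully explicit.
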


\begin{proof}
Since $d_w$ metrizes weak convergence, condition (ii) implies that $F_j$ converges weakly to $G$,
so that the family $\{F_j : j \in \NN\}$ is relatively compact and hence, by
Prokhorov's theorem, uniformly tight
\citep{billingsley1999}. Enlarging the resulting compact set so as to also
accommodate the tightness of the single measure $G$, we obtain a compact set
$K_\delta \subset \RR^d$ with
\begin{equation*}
F_j(K_\delta^c) < \delta/2\quad \text{for all } j \in \NN,
\qquad G(K_\delta^c) < \delta/2.
\end{equation*}
By TV-regularity and condition (i),
\begin{equation*}
\varepsilon \leq d_{TV}(F_j, G) = F_j(A_j) - G(A_j) = \int_{A_j} (f_j(x) - g(x))\,dx,
\end{equation*}
and splitting the domain of integration along $K_\delta$ and its complement gives
\begin{equation*}
\int_{A_j} (f_j - g)\,dx
= \int_{A_j \cap K_\delta} (f_j - g)\,dx + \int_{A_j \cap K_\delta^c} (f_j - g)\,dx.
\end{equation*}
Since $-g \leq g$ pointwise, the second term is bounded by
\begin{align*}
\int_{A_j \cap K_\delta^c} (f_j - g)\,dx
&\leq \int_{A_j \cap K_\delta^c} (f_j + g)\,dx
= F_j(A_j \cap K_\delta^c) + G(A_j \cap K_\delta^c) \\
&\leq F_j(K_\delta^c) + G(K_\delta^c) \leq \delta.
\end{align*}
Combining the last three displays yields
$\varepsilon \leq [F_j(A_j \cap K_\delta) - G(A_j \cap K_\delta)] + \delta$, as claimed.
\end{proof}

Proposition~\ref{prop:compact_mass} reveals that, however small $\delta > 0$ is chosen, there is a
compact set carrying all but $\delta$ of the total variation discrepancy, uniformly in $j$. While suggestive of the fact that the growth in perimeter documented by Theorem~\ref{thm:oscillations_high_dim_hausdorff} should not be entirely attributable to an increase in the area inside of it, Proposition~\ref{prop:compact_mass} on its own does not formally rule out that possibility. The next result closes this gap by showing that the
perimeter divergence persists even after intersection with a compact set $K$. It should be noticed that parts of the passing sets may still drift outside $K$, but the crux of the result is that the
divergence in perimeter must also occur inside $K$; as a consequence, the sets $A_j$ must become geometrically more
complicated within a fixed region of bounded Lebesgue measure.

\begin{theorem}\label{thm:oscillations_high_dim_hausdorff_localized}
Let $g, f_1, f_2, \dots$ be probability densities on $\RR^d$ with corresponding probability
distributions $G, F_1, F_2, \dots$. For each $j \in \NN$, assume that the pair $(F_j, G)$ is
TV-regular, and let $A_j := A(f_j,g)$. Suppose that, for some $\varepsilon>0$,
\begin{enumerate}
    \item[(i)] $d_{TV}(F_j, G) \geq \varepsilon$ for all $j \in \NN$,
    \item[(ii)] $d_w(F_j, G) \to 0$ as $j \to \infty$.
\end{enumerate}
Then, for any sequence $\delta_j > d_w(F_j, G)$ with $\lim_{j \to \infty} \delta_j = 0$, there
exists $t_j \in (0, \delta_j)$ and a compact set $K\subset \RR^d$ such that
\begin{equation*}
\lim_{j \to \infty} \mathcal{H}^{d-1}\bigl(P_{t_j}(f_j,g) \cap K\bigr) = \infty.
\end{equation*}
\end{theorem}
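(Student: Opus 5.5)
The plan is to rerun the proof of Theorem~\ref{thm:oscillations_high_dim_hausdorff}, but to localize the shell $S_j$ to a fixed compact set before applying the coarea formula. The compact set comes from Proposition~\ref{prop:compact_mass}, or more simply from the tightness argument in its proof.

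\emph{Choice of $K$.} Fix $\eta := \varepsilon/8$. Since $G$ is a single probability measure, there is a closed ball $K_0 = B(0,R)$ with $G(K_0^c) < \eta$. We then take $K := B(0,R+1)$. For $j$ large we have $\delta_j < 1$, so every point of the shell within distance $\delta_j$ of $K_0$ lies in $K$. The finitely many small $j$ do not affect the limit.

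\emph{Mass of the localized shell.} As in the proof of Theorem~\ref{thm:oscillations_high_dim_hausdorff}, the Lévy--Prokhorov inequality applied to $A_j$ gives $G(S_j) \geq \varepsilon - \delta_j$, where $S_j = \{0 < d_{A_j} < \delta_j\}$. Therefore
\[
G(S_j \cap K) \geq G(S_j) - G(K^c) \geq \varepsilon - \delta_j - \eta .
\]
Now pick $M$ with $G(\{g > M\}) \leq \varepsilon/2$, exactly as before. This yields
\[
\lambda_d(S_j \cap K) \geq \frac{\varepsilon/2 - \eta - \delta_j}{M},
\]
which is bounded below by a positive constant for large $j$.

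\emph{Coarea.} Apply the coarea formula~\eqref{eq:coarea} to $u = d_{A_j}$ with $E = S_j \cap K$. By Lemma~\ref{lem:eikonal}, $|\nabla d_{A_j}| = 1$ almost everywhere on $E$. This gives
\[
\lambda_d(S_j \cap K) = c_{d-1} \int_0^{\delta_j} \mathcal H^{d-1}\bigl(P_t(f_j,g) \cap K\bigr)\,dt .
\]
The averaging argument then produces $t_j \in (0,\delta_j)$ with
\[
\mathcal H^{d-1}\bigl(P_{t_j}(f_j,g) \cap K\bigr) \geq \frac{\varepsilon/2 - \eta - \delta_j}{M c_{d-1} \delta_j} \to \infty .
\]
Here $K$ is independent of $j$, as the statement requires.

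\emph{Where care is needed.} The one delicate point is ensuring that $K$ can be chosen independently of $j$ while still capturing a fixed fraction of the shell's $G$-mass. This is handled entirely by the tightness of the single measure $G$, since the bound $G(S_j) \geq \varepsilon - \delta_j$ already comes from $G$-mass. Consequently no uniform tightness of the $F_j$ is actually needed, although Proposition~\ref{prop:compact_mass} could be invoked instead.
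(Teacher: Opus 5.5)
Your proposal is correct and follows the paper's proof essentially step for step: you fix a compact $K$ using only the tightness of the single measure $G$, lower-bound $G(S_j\cap K)$ and hence $\lambda_d(S_j\cap K)$ through the level-$M$ truncation of $g$, and then apply the coarea formula and the averaging argument on $S_j\cap K$. The only difference is your enlargement from $B(0,R)$ to $B(0,R+1)$, which is harmless but unnecessary, since $G(K^c)\le G(K_0^c)<\eta$ already holds with $K_0$ itself.
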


\begin{proof}
Preliminarily, because $G$ is a probability measure, for any $\delta \in (0, \varepsilon/2)$ there exists a compact set $K \subset \RR^d$
such that $G(K^c) < \delta$. By definition of the
total variation distance and the L\'evy--Prokhorov distance, the conditions
$d_{TV}(F_j, G) = F_j(A_j) - G(A_j) \geq \varepsilon$ and $\delta_j > d_w(F_j, G)$ imply
\begin{equation*}
\varepsilon + G(A_j) \leq F_j(A_j) \leq G(A_j^{\delta_j}) + \delta_j,
\end{equation*}
which yields
%\begin{equation*}
$G(A_j^{\delta_j} \setminus A_j) \geq \varepsilon - \delta_j$.
%\end{equation*}
Let $S_j :=\{x \in \RR^d : 0 < d_{A_j}(x) < \delta_j\} \equiv A_j^{\delta_j} \setminus A_j$.
Decomposing $S_j$ into its intersection with $K$ and $K^c$ gives
\begin{equation*}
G(S_j \cap K) = G(S_j) - G(S_j \setminus K) \geq G(S_j) - G(K^c) \geq \varepsilon - \delta_j - \delta.
\end{equation*}
Using the same reasoning as in the proof of Theorem~\ref{thm:oscillations_high_dim_hausdorff}, we
have
\begin{equation}\label{eq:localized_shell_lower}
\lambda_d(S_j \cap K) \geq \frac{\varepsilon/2 - \delta - \delta_j}{M}
\end{equation}
for some $M\in(0,\infty)$. Since
$S_j \cap K=(A_j^{\delta_j}\setminus A_j)\cap K\subseteq A_j^{\delta_j}\setminus A_j \subseteq \RR^d \setminus A_j$
and $A_j$ is closed, Lemma~\ref{lem:eikonal} implies $|\nabla d_{A_j}(x)| = 1$ almost everywhere
on $S_j \cap K$. Applying the coarea formula~\eqref{eq:coarea} to the map
$x \mapsto d_{A_j}(x)$ over $S_j \cap K$ yields
\begin{align*}
\lambda_d(S_j \cap K) & = \int_{S_j \cap K} |\nabla d_{A_j}(x)| \, dx \\
& = c_{d-1}\int_0^{\delta_j} \mathcal{H}^{d-1}(\{x \in K : d_{A_j}(x) = t\}) \, dt \\
& \equiv c_{d-1}\int_0^{\delta_j} \mathcal{H}^{d-1}(\{x \in \RR^d : d_{A_j}(x) = t\} \cap K) \, dt.
\end{align*}
By the mean value property for integrals, there exists a sequence
$t_j \in (0, \delta_j)$ such that
\begin{equation*}
\mathcal{H}^{d-1}(\{x \in \RR^d : d_{A_j}(x) = t_j\} \cap K) \geq \frac{\lambda_d(S_j \cap K)}{c_{d-1}\delta_j}.
\end{equation*}
Substituting \eqref{eq:localized_shell_lower} into the inequality gives
\begin{equation*}
\mathcal{H}^{d-1}(\{x \in \RR^d : d_{A_j}(x) = t_j\} \cap K) \geq \frac{\varepsilon/2 - \delta - \delta_j}{M c_{d-1}\delta_j}.
\end{equation*}
Since $\delta < \varepsilon/2$ is fixed, taking the limit as $j \to \infty$ with $\delta_j \to 0$
completes the proof.
\end{proof}

\subsection{Divergence of the number of connected components}\label{subsec:components}

A diverging $(d-1)$-dimensional measure can, in principle, be produced either by a growing number
of separate pieces or by a single piece of growing complexity. While the first mechanism cannot be assumed to hold in general (see Section~\ref{sec:example} for a counterexample), the next result isolates a simple sufficient condition for this to happen: if no single connected component is allowed to carry too much of the measure, then the
number of components must diverge. In particular, a constant bound (uniform in $j$) on such component-specific Hausdorff measures suffices.

\begin{proposition}\label{pro:components_diverge}
Under the same assumptions as in Theorem~\ref{thm:oscillations_high_dim_hausdorff}, $S_j$ as defined in the proof of Theorem~\ref{thm:oscillations_high_dim_hausdorff} is open
and hence it uniquely decomposes into at most countably many disjoint open connected components:
\begin{equation*}
S_j = \bigcup_{k=1}^{N_j} U_{j,k},
\end{equation*}
where $N_j \in \NN \cup \{\infty\}$. Therefore, letting $t_j \in (0, \delta_j)$ be the sequence
given by Theorem~\ref{thm:oscillations_high_dim_hausdorff}, if there exists a sequence $K_j > 0$
with $K_j = o(1/\delta_j)$ such that
\begin{equation*}
\mathcal{H}^{d-1}\bigl(P_{t_j}(f_j,g) \cap U_{j,k}\bigr) \leq K_j
\end{equation*}
for all $k\in\NN$, then
%\begin{equation*}
$\lim_{j \to \infty} N_j = \infty$.
%\end{equation*}
\end{proposition}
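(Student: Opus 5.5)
First, $S_j=\{x: 0<d_{A_j}(x)<\delta_j\}$ is open because it is the preimage of the open interval $(0,\delta_j)$ under the continuous (indeed $1$-Lipschitz, by Lemma~\ref{lem:eikonal}) map $d_{A_j}$. Any open subset of $\RR^d$ is the disjoint union of its connected components. Each component is open, since $\RR^d$ is locally connected. Each component also contains a point with rational coordinates, so there are at most countably many of them. This gives the decomposition $S_j=\bigcup_{k=1}^{N_j}U_{j,k}$, and it is unique.

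Next I will show that the passing set lies inside $S_j$. Since $t_j\in(0,\delta_j)$, every $x$ with $d_{A_j}(x)=t_j$ satisfies $0<d_{A_j}(x)<\delta_j$, so $P_{t_j}(f_j,g)\subseteq S_j$. As the components $U_{j,k}$ are disjoint, countable additivity of $\mathcal H^{d-1}$ on Borel sets applies. Here $P_{t_j}$ is closed and each $U_{j,k}$ is open, so all the pieces are Borel. We obtain
\begin{equation*}
\mathcal H^{d-1}\bigl(P_{t_j}(f_j,g)\bigr)=\sum_{k=1}^{N_j}\mathcal H^{d-1}\bigl(P_{t_j}(f_j,g)\cap U_{j,k}\bigr)\leq N_j K_j .
\end{equation*}
If $N_j=\infty$ the desired bound is trivial, so we may assume $N_j$ is finite.

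I then combine this with the quantitative lower bound from the proof of Theorem~\ref{thm:oscillations_high_dim_hausdorff}, namely
\begin{equation*}
\mathcal H^{d-1}\bigl(P_{t_j}(f_j,g)\bigr)\geq \frac{\varepsilon/2-\delta_j}{Mc_{d-1}\delta_j}.
\end{equation*}
Putting the two bounds together gives
\begin{equation*}
N_j\geq \frac{\varepsilon/2-\delta_j}{Mc_{d-1}}\cdot\frac{1}{\delta_jK_j}.
\end{equation*}
The first factor tends to $\varepsilon/(2Mc_{d-1})>0$. The hypothesis $K_j=o(1/\delta_j)$ means $\delta_jK_j\to0$, so the second factor diverges. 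Hence $N_j\to\infty$.

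The argument requires $t_j$ to be the specific level selected in Theorem~\ref{thm:oscillations_high_dim_hausdorff}, since the lower bound is stated only at that level; the statement of the proposition provides exactly this. The main point needing care is that lower bound. It is not part of the theorem's statement: it appears as the last display inside the theorem's proof, so I will quote that display explicitly. Beyond this, the only step that needs checking is the Borel measurability of the pieces, which is what justifies using countable additivity of $\mathcal H^{d-1}$.
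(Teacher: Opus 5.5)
Your proposal is correct and follows essentially the same route as the paper: you use the openness of $S_j$, the inclusion $P_{t_j}(f_j,g)\subseteq S_j$, and countable additivity of $\mathcal H^{d-1}$ over the components to get $\mathcal H^{d-1}(P_{t_j}(f_j,g))\leq N_jK_j$, which you then compare with the lower bound $\frac{\varepsilon/2-\delta_j}{Mc_{d-1}\delta_j}$ from the last display of the proof of Theorem~\ref{thm:oscillations_high_dim_hausdorff}. Handling the case $N_j=\infty$ index by index, where the bound holds trivially, is if anything slightly cleaner than the paper's subsequence dichotomy.
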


\begin{proof}
Either $N_j=\infty$ for all large enough $j\in\NN$, in which case the conclusion is trivial, or
$N_j$ is finite along a subsequence of indices $j$. In this case, by
Theorem~\ref{thm:oscillations_high_dim_hausdorff} and the fact that $H^{d-1}$ is a measure, we get
\begin{align*}
\sum_{k=1}^{N_j} \mathcal{H}^{d-1}(\{x \in U_{j,k} : d_{A_j}(x) = t_j\}) & = \mathcal{H}^{d-1}(\{x \in S_j : d_{A_j}(x) = t_j\}) \\
& \equiv \mathcal{H}^{d-1}(\{x \in \RR^d : d_{A_j}(x) = t_j\}) \\
& \geq \frac{\varepsilon/2 - \delta_j}{M c_{d-1}\delta_j}
\end{align*}
for some $M>0$. Using the uniform upper bound $K_j$ for each of the $N_j$ components, we have
\begin{equation*}
N_j K_j \geq \frac{\varepsilon/2 - \delta_j}{M c_{d-1}\delta_j} \implies N_j \geq \frac{\varepsilon/2 - \delta_j}{M c_{d-1} K_j \delta_j}.
\end{equation*}
Since $K_j = o(1/\delta_j)$, we have $K_j \delta_j \to 0$ as $j \to \infty$, completing the proof.
\end{proof}

We finish this section by emphasizing that the same conclusion holds for the passing sets themselves. Under the assumptions of
Proposition~\ref{pro:components_diverge}, let $M_j$ denote the number of connected components of
$P_{t_j}(f_j,g)$; then we claim that $M_j \to \infty$ as $j \to \infty$. To see this, notice first that the case in which $M_j$
is infinite for all large enough $j$ is obvious, so assume that there is a subsequence of indices
$j$ for which $M_j$ is finite. Each connected component $O_{j,k}$ of
$P_{t_j}(f_j,g)\subseteq S_j$ is included in one and only one connected component
$U_{j,k}$ of $S_j$, so that
\begin{equation*}
\mathcal H^{d-1}(O_{j,k})\leq \mathcal{H}^{d-1}(\{x \in U_{j,k} : d_{A_j}(x) = t_j\}) \leq K_j,
\end{equation*}
and therefore
\begin{equation*}
\frac{\varepsilon/2-\delta_j}{Mc_{d-1}\delta_j}\leq\mathcal{H}^{d-1}(\{x \in \RR^d : d_{A_j}(x) = t_j\}) \leq K_jM_j,
\end{equation*}
which, together with $K_j=o(1/\delta_j)$, implies $M_j\to\infty$.

\section{Illustrative examples}\label{sec:example}

To illustrate our theory, we first construct a sequence of probability measures $F_j$ on $\mathbb R^2$ satisfying the hypotheses of
Theorem~\ref{thm:oscillations_high_dim_hausdorff} and giving rise to a sequence of passing sets that consist of one single component at each step $j$. This first example serves two purposes: it illustrates Theorem~\ref{thm:oscillations_high_dim_hausdorff} and it shows that the additional hypothesis of
Proposition~\ref{pro:components_diverge} cannot be dispensed with, since the divergence of the
$(d-1)$-dimensional measure can be realized by a single connected component of growing geometric complexity.

Let $\Omega := [0,1]^2$ be the unit square in $\RR^2$. We define the limit probability density
$g : \RR^2 \to \RR$ as $g(x,y) := 1_\Omega(x,y)$, corresponding to the uniform probability measure
$G$ on $\Omega$. For any integer $j \geq 2$, we construct a connected set $A_j \subset \Omega$
composed of a horizontal ``bridge'' $B_j$ and a set of vertical ``teeth'' $T_j$:
\begin{align*}
    B_j &:= [0,1] \times \left[ \frac{1}{2} - \frac{1}{6j}, \frac{1}{2} + \frac{1}{6j} \right], \\
    T_j &:= \bigcup_{k=0}^{j-1} \left[ \frac{k}{j}, \frac{k}{j} + \frac{1}{3j} \right] \times [0, 1].
\end{align*}
Then let $A_j := B_j \cup T_j$. Because every vertical ``tooth'' intersects the central ``bridge,'' $A_j$
is a single connected component; see Figure~\ref{fig:Aj} for a visualization.

\begin{figure}
    \centering
    \includegraphics[width=0.95\linewidth]{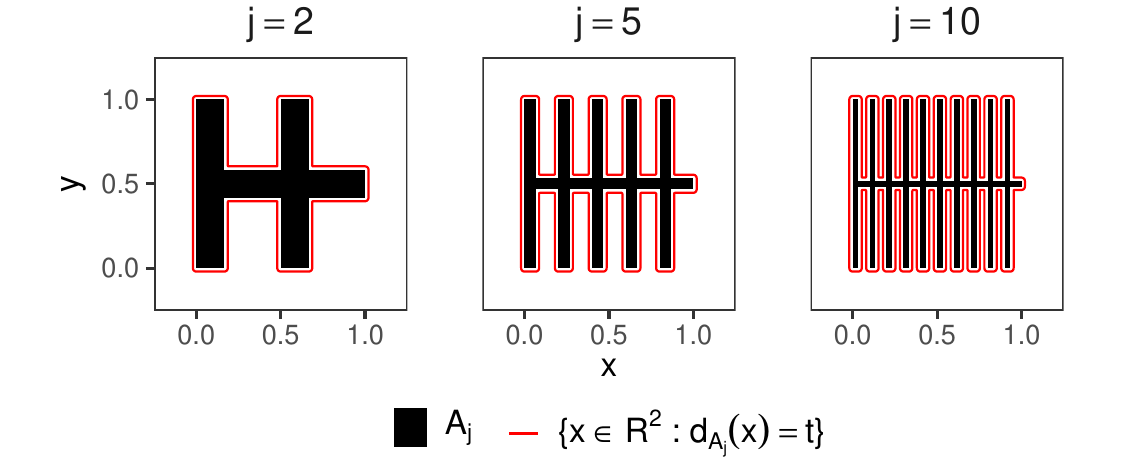}
    \caption{$A_j$ sets, shaded in black, for the ``bridge-and-teeth'' example. As $j$ increases, the length of the passing sets $P_t(f_j,g)$ (red line, plotted here for some small $t>0$) diverges to infinity, but thanks to the ``bridge'', the passing sets stay connected.}
    \label{fig:Aj}
\end{figure}

Let $p_j := \lambda_2(A_j)$ denote the Lebesgue measure of $A_j$, which is computed as follows:
\begin{equation*}
    p_j = \lambda_2(B_j) + \lambda_2(T_j) - \lambda_2(B_j \cap T_j) = \frac{1}{3j} + \frac{1}{3} - \frac{1}{3} \left( \frac{1}{3j} \right) = \frac{1}{3} + \frac{2}{9j}.
\end{equation*}
Note that for all $j \geq 2$, we have $p_j \leq \frac{1}{3} + \frac{1}{9} = \frac{4}{9} < \frac{1}{2}$.
Now define the sequence of probability densities $f_j : \RR^2 \to \RR$ as
\begin{equation*}
    f_j(x,y) := \left( 1 + 1_{A_j}(x,y) - \frac{p_j}{1-p_j}1_{\Omega \setminus A_j}(x,y) \right) 1_\Omega(x,y).
\end{equation*}
Since $p_j < 1/2$, it follows that $1 - p_j/(1-p_j) > 0$, ensuring $f_j(x,y) \geq 0$ for all
$(x,y) \in \RR^2$ and $j\geq 2$. Integrating over $\RR^2$ also yields $\int_{\RR^2} f_j \, dx \, dy = 1$, so
$f_j$ is a valid probability density on $\RR^2$ for all $j\geq 2$. Let $F_j$ denote the
probability measure associated with $f_j$.

For all $j\geq 2$, the pair $(F_j,G)$ is TV-regular: the set $A_j$ is closed and,
because it coincides with $\{x\in\RR^2 : f_j(x)>g(x)\}$, it achieves the total variation distance
between $F_j$ and $G$, so that $A_j = A(f_j,g)$ without loss of generality. We now verify that the sequence meets conditions
(i) and (ii) of Theorem~\ref{thm:oscillations_high_dim_hausdorff}.

\begin{proposition}\label{prop:no_tv}
The sequence of probability measures $F_j$ does not converge to $G$ in total variation distance. In particular, $d_{TV}(F_j,G)\geq 1/3$ for all $j\geq 2$.
\end{proposition}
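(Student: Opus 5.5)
The plan is a direct computation. We use the identity $d_{TV}(F_j,G)=\tfrac12\int|f_j-g|$, or equivalently $d_{TV}(F_j,G)=F_j(C)-G(C)$ with $C=\{f_j>g\}$, which the paper recalls in its preliminaries. By construction, on $\Omega$ we have $f_j-g=1_{A_j}-\frac{p_j}{1-p_j}1_{\Omega\setminus A_j}$, and outside $\Omega$ both densities vanish. Hence $\{f_j>g\}=A_j$ and
\begin{equation*}
d_{TV}(F_j,G)=\int_{A_j}(f_j-g)\,dx\,dy=\lambda_2(A_j)=p_j .
\end{equation*}

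As a consistency check, the negative part $\int_{\Omega\setminus A_j}\frac{p_j}{1-p_j}\,dx\,dy=p_j$ agrees with the positive part, as it must since both densities integrate to one. The symmetric formula therefore also gives $\tfrac12(p_j+p_j)=p_j$.

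It then remains to use the formula $p_j=\frac13+\frac{2}{9j}$ computed just before the statement, which gives $p_j>\frac13$ for every $j\ge2$. So $d_{TV}(F_j,G)\ge 1/3$ uniformly, and in particular $F_j$ does not converge to $G$ in total variation.

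The only point needing care is the computation of $p_j$ itself, which has already been done in the text, via inclusion–exclusion. The teeth have total area $j\cdot\frac{1}{3j}=\frac13$. The bridge has area $\frac{1}{3j}$. Their overlap is $\frac13\cdot\frac1{3j}$, because the teeth occupy a fraction $1/3$ of each horizontal line. There is no real obstacle.
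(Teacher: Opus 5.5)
Your proof is correct and essentially the paper's own: both identify $A_j=\{f_j>g\}$ as the set attaining the total variation distance and compute $d_{TV}(F_j,G)=F_j(A_j)-G(A_j)=2p_j-p_j=p_j=\frac13+\frac{2}{9j}>\frac13$. Your extra check that the negative part $\int_{\Omega\setminus A_j}\frac{p_j}{1-p_j}\,dx\,dy$ also equals $p_j$ is a harmless confirmation that the paper does not include.
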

\begin{proof}
By definition of the densities in the sequence, $f_j(x,y) = 2$ for $(x,y) \in A_j$, and $f_j(x,y) < 1$ for
$(x,y) \notin A_j$. Moreover, the supremum defining the total variation distance is achieved
by the set $A_j$:
\begin{equation*}
    d_{TV}(F_j, G) = F_j(A_j) - G(A_j).
\end{equation*}
To evaluate this difference, notice that $F_j(A_j) = 2 \lambda_2(A_j) = 2p_j$ and $G(A_j) = p_j$. Thus,
\begin{equation*}
    d_{TV}(F_j, G) = 2p_j - p_j = p_j = \frac{1}{3} + \frac{2}{9j}>\frac{1}{3}
\end{equation*}
for all $j\ge 2$, which completes the proof.
\end{proof}

\begin{proposition}\label{prop:weak}
The sequence of probability measures $F_j$ converges to $G$ in L\'evy--Prokhorov distance.
\end{proposition}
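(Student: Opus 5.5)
We plan to show that $F_j \to G$ weakly; since the L\'evy--Prokhorov metric $d_w$ metrizes weak convergence on $\RR^2$, this gives the claim. By the portmanteau theorem (or a standard density argument), it suffices to show $F_j(R)\to G(R)$ for every axis-parallel rectangle $R=[a,b]\times[c,d]$. This works because such rectangles form a convergence-determining class for measures on $\RR^2$: they are closed under finite intersections, and every open set is a countable union of them. Alternatively, we can show $\int \varphi f_j \to \int \varphi g$ for every bounded Lipschitz $\varphi$ supported near $\Omega$. We will follow the rectangle route, as it fits the piecewise-constant structure of $f_j$.

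\emph{Step 1: rewrite the density difference.} Write $f_j - g = \bigl(1_{A_j} - \tfrac{p_j}{1-p_j}1_{\Omega\setminus A_j}\bigr)1_\Omega = \tfrac{1}{1-p_j}\bigl(1_{A_j}-p_j 1_\Omega\bigr)$. Since $p_j\to 1/3$, it suffices to show that $\lambda_2(A_j\cap R)-p_j\lambda_2(\Omega\cap R)\to 0$ for each rectangle $R$. Equivalently, the measures $1_{A_j}\,d\lambda_2$ should converge weakly to $\tfrac13 1_\Omega\,d\lambda_2$.

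\emph{Step 2: compute $\lambda_2(A_j\cap R)$.} We may assume $R\subseteq\Omega$ by intersecting. The bridge contributes at most $\lambda_2(B_j)=1/(3j)\to0$, so only the teeth matter. The set $T_j\cap R$ is a union of vertical strips. There are about $j(b-a)$ full period cells $[k/j,(k+1)/j]$ inside $[a,b]$, each contributing exactly $\tfrac{1}{3j}(d-c)$. At most two boundary cells are only partially covered, each contributing an error of at most $\tfrac1j(d-c)$. Hence $\lambda_2(T_j\cap R)=\tfrac13(b-a)(d-c)+O(1/j)$. Combined with $p_j=\tfrac13+O(1/j)$, this gives the required convergence at rate $O(1/j)$.

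\emph{Step 3: conclude.} From Step 2, $F_j(R)-G(R)=O(1/j)$ uniformly over rectangles. By the convergence-determining property, $F_j\Rightarrow G$ and therefore $d_w(F_j,G)\to0$.

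The main obstacle is the justification that convergence on rectangles implies weak convergence. We will handle it cleanly via the Lipschitz-test-function route as a backup. For $\varphi$ bounded and $L$-Lipschitz, partition $\Omega$ into cells $[k/j,(k+1)/j]\times[0,1]$ and further into squares of side $1/j$. On each square $Q$, $\int_Q(1_{A_j}-\tfrac13)\,dx$ vanishes up to the bridge contribution. Replacing $\varphi$ by its value at the square's center then costs at most $L\sqrt2/j\cdot\lambda_2(Q)$. Summing gives $\bigl|\int\varphi(f_j-g)\bigr|\le C(L/j+\|\varphi\|_\infty/j)$. Convergence for all bounded Lipschitz $\varphi$ characterizes weak convergence by the portmanteau theorem.
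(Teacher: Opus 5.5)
Your proof is correct, and it rests on the same two observations as the paper's proof. The first is the affine rewriting $f_j-g=\frac{1}{1-p_j}(1_{A_j}-p_j1_\Omega)$, equivalent to the paper's $f_j=\frac{1-2p_j}{1-p_j}1_\Omega+\frac{1}{1-p_j}1_{A_j}$. The second is that the bridge has area $1/(3j)$, while the teeth fill exactly one third of every period strip $[k/j,(k+1)/j]\times[0,1]$.

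The difference lies in what you test against. The paper takes an arbitrary $\phi\in C_b(\RR^2)$ and integrates out $y$ by Fubini. It then uses the modulus of continuity of the marginal $\Phi(x)=\int_0^1\phi(x,y)\,dy$ to replace each tooth integral by $\frac{1}{3j}\Phi(k/j)$, and recognizes one third of a Riemann sum. This needs nothing beyond the definition of weak convergence.

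You instead test on closed rectangles and appeal to Billingsley's $\pi$-system criterion: a $\pi$-system such that every open set is a countable union of its members is convergence-determining. You verify its hypotheses correctly. What this buys is an exact cell-by-cell count. The two partial cells, the bridge, and the shift $p_j-\frac13=\frac{2}{9j}$ together give $|F_j(R)-G(R)|\le C/j$ with an absolute constant $C$, uniformly over all rectangles. That is a quantitative statement the paper's qualitative limit does not deliver.

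Your Lipschitz backup is closer to the paper's argument. It performs the same local averaging at scale $1/j$, on squares rather than strips, with a Lipschitz constant in place of the modulus $\omega_\phi$. It yields the explicit bound $O\bigl((L+\|\varphi\|_\infty)/j\bigr)$. Either of your two routes suffices on its own.
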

\begin{proof}
Since $d_{w}$ metrizes weak convergence, it suffices to show that for any bounded, continuous function $\phi \in C_b(\RR^2)$,
\begin{equation*}
    \lim_{j \to \infty} \int_{\RR^2} f_j(x,y) \phi(x,y) \, dx \, dy = \int_{\RR^2} g(x,y)\phi(x,y) \, dx \, dy.
\end{equation*}
Because both $f_j$ and $g$ evaluate to zero outside of $\Omega$, the integrals may be restricted
to $\Omega$ without loss of generality. First, we analyze the weak limit of the indicator function $1_{A_j}$. Write
\begin{equation*}
    \int_\Omega 1_{A_j} \phi(x,y) \, dx \, dy = \int_{T_j} \phi(x,y) \, dx \, dy + \int_{B_j \setminus T_j} \phi(x,y) \, dx \, dy.
\end{equation*}
Since $\phi$ is bounded, let $M := \sup_{(x,y)\in\RR^2} |\phi(x,y)|<\infty$, so that
\begin{equation*}
    \left| \int_{B_j \setminus T_j} \phi(x,y) \, dx \, dy \right| \leq M \lambda_2(B_j) = \frac{M}{3j} \to 0
\end{equation*}
as $j\to\infty$. Now define the marginal integral $\Phi(x) := \int_0^1 \phi(x,y) \, dy$, so that Fubini's theorem yields
\begin{equation*}
    \int_{T_j} \phi(x,y) \, dx \, dy = \sum_{k=0}^{j-1} \int_{k/j}^{k/j + 1/(3j)} \Phi(x) \, dx.
\end{equation*}
Since $\phi$ is continuous on the compact set $\Omega$, it is uniformly continuous
there, with some modulus of continuity $\omega_\phi$; that is, $|\phi(z) - \phi(z')| \leq
\omega_\phi(|z - z'|)$ for all $z, z' \in \Omega$, with $\omega_\phi(\delta) \to 0$ as
$\delta \to 0^+$. The marginal $\Phi$ inherits this modulus, since for $x, x' \in [0,1]$
\begin{equation*}
    |\Phi(x) - \Phi(x')|
    = \left| \int_0^1 \bigl( \phi(x,y) - \phi(x',y) \bigr) dy \right|
    \leq \int_0^1 \bigl| \phi(x,y) - \phi(x',y) \bigr| \, dy
    \leq \omega_\phi(|x - x'|),
\end{equation*}
the last step using the fact that $|(x,y) - (x',y)| = |x - x'|$. Thus $\Phi$ is uniformly continuous on $[0,1]$
with modulus $\omega_\phi$. On each sub-interval $[k/j,\, k/j + 1/(3j)]$, every point lies within
$1/(3j)$ of the left endpoint $k/j$, so that
\begin{equation*}
    \left| \int_{k/j}^{k/j + 1/(3j)} \Phi(x)\,dx - \frac{1}{3j}\,\Phi\!\left(\frac{k}{j}\right) \right|
    = \left| \int_{k/j}^{k/j + 1/(3j)} \left( \Phi(x) - \Phi\!\left(\frac{k}{j}\right) \right) dx \right|
    \leq \frac{1}{3j}\,\omega_\phi\!\left(\frac{1}{3j}\right).
\end{equation*}
Summing over the $j$ sub-intervals, the total error is bounded by
$\frac{1}{3}\,\omega_\phi(1/(3j)) = o(1)$, whence
\begin{equation*}
    \int_{T_j} \phi(x,y) \, dx \, dy = \frac{1}{3} \sum_{k=0}^{j-1}\frac{1}{j}  \Phi\left(\frac{k}{j}\right) + o(1).
\end{equation*}
The first term is one-third of a Riemann sum for $\Phi$ over $[0,1]$, so that
\begin{equation*}
    \lim_{j \to \infty} \int_{T_j} \phi(x,y) \, dx \, dy = \frac{1}{3} \int_0^1 \Phi(x) \, dx = \frac{1}{3} \int_\Omega \phi(x,y) \, dx \, dy.
\end{equation*}
Now, substituting $1_{\Omega \setminus A_j} = 1_\Omega - 1_{A_j}$ into the definition of $f_j$
yields
\begin{equation*}
    f_j = \left( \left( 1 - \frac{p_j}{1-p_j} \right) 1_\Omega + \left( 1 + \frac{p_j}{1-p_j} \right) 1_{A_j} \right) 1_\Omega = \frac{1-2p_j}{1-p_j} 1_\Omega + \frac{1}{1-p_j} 1_{A_j}.
\end{equation*}
As $j \to \infty$, $p_j \to \frac{1}{3}$, so
\begin{align*}
    \lim_{j \to \infty} \int_\Omega f_j(x,y) \phi(x,y) \, dx \, dy &= \frac{1 - 2(1/3)}{1 - 1/3} \int_\Omega \phi(x,y) \, dx \, dy \\
    &+ \frac{1}{1 - 1/3} \left( \frac{1}{3} \int_\Omega \phi(x,y) \, dx \, dy \right) \\
    & = \int_\Omega \phi(x,y) \, dx \, dy.
\end{align*}
Thus, $F_j$ converges weakly to $G$.
\end{proof}

The conditions of Theorems~\ref{thm:oscillations_high_dim_hausdorff} and \ref{thm:oscillations_high_dim_hausdorff_localized} are met, which implies that
the length of the passing set $P_{t_j}(f_j,g)$, for some $t_j\to0$, diverges to infinity (and it clearly does so in a compact set).
However, the conclusions of Proposition~\ref{pro:components_diverge} do not follow, as the set $A_j$
consists of only a single connected component and so do the sets
$\{x\in\RR^2 : 0<d_{A_j}(x)<\delta\}$ and $P_t(f_j,g)$ for any $\delta,t>0$;
see again Figure~\ref{fig:Aj} for a visualization (as a red line) of
$P_t(f_j,g)$ when $t$ is small (for large $t$, it is also clear that
$P_t(f_j,g)$ has a single component). This example is particularly helpful to understand why the one-dimensional intuition of an increasing number of oscillations, in the form of a passing set fragmenting in a growing number of components, fails in general when $d\geq 2$: in more than one dimension, there is an infinite number of directions along which oscillatory behavior may happen, and because the latter may happen along some directions but not others, the resulting passing sets may remain connected along the ``non-oscillatory directions.'' This is precisely what emerges from Figure~\ref{fig:Aj}, where an increasingly oscillatory marginal density for the $x$-axis variable is paired by a nearly uniform marginal density for the $y$-axis coordinate, where a thinning ``bridge'' connects the horizontal ``teeth'' or oscillations.

The previous example has shown that, for the conclusions of Theorems~\ref{thm:oscillations_high_dim_hausdorff} and \ref{thm:oscillations_high_dim_hausdorff_localized} to hold in $d\geq 2$, the passing set need not fragment into an increasing number of components. We now show that this fragmentation, while not necessary, may nevertheless occur in practice, producing a phenomenon that is more in line with the one-dimensional intuition of an increasing number of oscillations. To that end, first note that the sequence of densities we just analyzed, once it is appropriately modified, can be used to produce an example of a sequence for which the number of connected components of $P_{t_j}(f_j,g)$, for some $t_j\to 0$, goes to infinity. In particular, redefining $A_j:=T_j$, that is, removing the ``bridge'' from the previous construction, the convergence analysis of the resulting sequence of probability measures remains virtually unchanged, while choosing $t_j=1/(6j)$,  $P_{t_j}(f_j,g)$ is easily seen to consist of $j$ components (each ``tooth'' is at distance $2/(3j)$ from the nearest distinct ``tooth''). Hence, the number of components of $P_{t_j}(f_j,g)$ diverges to infinity, in accordance with Proposition~\ref{pro:components_diverge} and the discussion following it.

We close this section with a related example, which is closer
in spirit to the trigonometric sequence
discussed in Sections~\ref{sec:introduction} and \ref{sec:from1d} (recall Figure~\ref{fig:cosine_1d}) and in which the passing sets also fragment into a
diverging number of components. Let $\Omega:=[0,1]^2$, $g:=1_\Omega$, and for $j\in\NN$ define
\begin{equation*}
f_j(x,y) := \bigl(1+\cos(2\pi j x)\bigr)\bigl(1+\cos(2\pi j y)\bigr)\,1_\Omega(x,y).
\end{equation*}
Since $\int_0^1 (1+\cos(2\pi j x))\,dx = 1$ for every $j$, Fubini's theorem ensures $\int_{\Omega} f_j(x,y)\,dx\,dy=1$, so that $f_j$ is a valid probability density; the associated measure is
denoted $F_j$. This is the most intuitive two-dimensional analogue of the
one-dimensional oscillating sequence $x\mapsto 1+\cos(2\pi j x)$, obtained by taking
the product of two such factors. Clearly, the construction can be generalized to $d>2$ without difficulty.

That $F_j\to G$ weakly follows along the same lines as in the proof of Proposition~\ref{prop:weak}: for $\phi\in C_b(\RR^2)$, the Riemann--Lebesgue lemma
applied in each variable gives $\int_\Omega f_j(x,y)\,\phi(x,y)\,dx\,dy \to \int_\Omega \phi(x,y)\,dx\,dy$, since
every term involving $\cos(2\pi j x)$ or $\cos(2\pi j y)$ vanishes in the
limit. On the other hand $F_j$ does not converge to $G$ in total variation: writing
$C_j := \{(x,y)\in\Omega : f_j(x,y)>1\}$, one has
$d_{TV}(F_j,G) = \int_{C_j}(f_j(x,y)-1)\,dx\,dy$, and a direct computation shows that this
quantity is bounded away from zero uniformly in $j$, as the average of
$(f_j-1)_+$ over each period cell does not depend on $j$. The pair $(F_j,G)$ is also
TV-regular, with $A_j := A(f_j,g)$ taken to be $\overline{C_j}$, which achieves the total variation distance because
$\overline{C_j}\setminus C_j$ is $\lambda_2$-null. Therefore Theorems \ref{thm:oscillations_high_dim_hausdorff} and \ref{thm:oscillations_high_dim_hausdorff_localized} apply, so that there exists $t_j\to 0$ such that $\mathcal H^1(P_{t_j}(f_j,g))\to \infty$ as $j\to \infty$. 

The set $C_j$ consists of one connected component within each of the $(j+1)^2$ 
squares of side $1/(j+1)$ making up $\Omega$, so it has $(j+1)^2$ connected
components; see Figure~\ref{fig:cosine} for a visual illustration. Consequently, for every sufficiently small $t>0$ the passing set
$P_t(f_j,g)$ likewise splits into $(j+1)^2$ components, one enclosing
each component of $A_j$, and both its total $\mathcal H^{1}$ measure and its number of
components diverge as $j\to\infty$; see again Figure~\ref{fig:cosine}. This is in line with the fact that the hypotheses
of Proposition~\ref{pro:components_diverge} are met, since the perimeter of each component of small enlargements of $A_j$ is trivially bounded by 4 (the perimeter of $\Omega$). This in turn implies that, unlike in the ``teeth-and-bridge'' example, the divergence of $\mathcal H^{1}(P_{t_j}(f_j,g))$ is
accompanied by increasing fragmentation of the passing set, recovering the one-dimensional picture
of a diverging number of oscillations of $f_j$ around $g$.

\begin{figure}[t]
    \centering
    \includegraphics[width=0.95\linewidth]{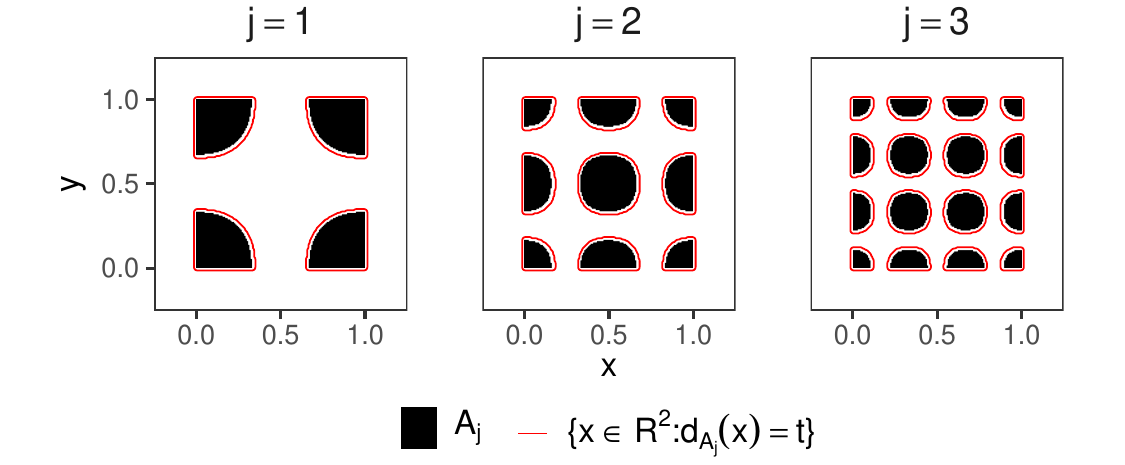}
    \caption{The sets $A_j=\overline{\{f_j>g\}}$ (black) for the
    densities $f_j(x,y)=(1+\cos(2\pi j x))(1+\cos(2\pi j y))$ and $g=1$ on $[0,1]^2$, as $j$ increases. $A_j$ breaks into $(j+1)^2$ disjoint
    components; the passing sets $P_t(f_j,g)$ (red, for a small $t>0$) enclose each component
    separately, so that both their total length and their number of connected
    components diverge, in contrast with the ``bridge-and-teeth'' example of
    Figure~\ref{fig:Aj}.}
    \label{fig:cosine}
\end{figure}

\section{Conclusion}\label{sec:conclusion}

We have shown that, for probability measures on $\RR^d$ admitting densities with respect to the
Lebesgue measure, weak convergence in the absence of total variation convergence forces the
$(d-1)$-dimensional Hausdorff measure of the density passing sets to diverge, at a rate inversely
proportional to the L\'evy--Prokhorov convergence rate, and that the divergence takes place within
a compact set. The one-dimensional notion of a diverging number of oscillations is recovered as
the case $d=1$, but the example of Section~\ref{sec:example} shows that it does not in general
survive the passage to higher dimensions in its original form: the growing geometric complexity of the sets $A(f_j,g)$ need not manifest itself as a fragmentation into distinct components, and
is more generally captured by their size in the sense of $\mathcal H^{d-1}$.

Two directions for future investigation are
worth pointing out. First, whether any kind of converse result holds remains an open question: it would be of theoretical and practical interest to determine to what
extent the divergence of the passing sets' measure is not merely a consequence but a
characterization of the failure of total variation convergence under weak convergence. Second, the results obtained in this article may find interesting applications in Bayesian asymptotics, which provided the
original motivation for this work.

\bibliographystyle{plainnat}
\bibliography{arxiv}

\end{document}